\numberwithin{equation}{section}
\newtheorem{theorem}{Theorem}
\newtheorem{lemma}[theorem]{Lemma}
\newtheorem{definition}[theorem]{Definition}
\newtheorem{remark}[theorem]{Remark}
\begin{document}

\title[Existence result of the global attractor]{Existence result of the global attractor\\ 
for a triply nonlinear thermistor problem}

\thanks{\bf This is a \ztotpages\ pages preprint of a paper whose final and definite form 
is published in 'Moroccan J. of Pure and Appl. Anal. (MJPAA)', 
ISSN: Online 2351-8227 -- Print 2605-6364.}

\author[M. R. Sidi Ammi, I. Dahi, A. El Hachimi, D. F. M. Torres]{%
Moulay Rchid Sidi Ammi 
\and Ibrahim Dahi \and \\
Abderrahmane El Hachimi
\and Delfim F. M. Torres}

\address{Moulay Rchid Sidi Ammi (corresponding author) \newline
Department of Mathematics, AMNEA Group, MAIS Laboratory, 
Faculty of Sciences and Technics, 
Moulay Ismail B. P. 509, Errachidia, Morocco.}
\email{rachidsidiammi@yahoo.fr}

 \address{Ibrahim DAHI \newline
Department of Mathematics, AMNEA Group, MAIS Laboratory, 
Faculty of Sciences and Technics, 
Moulay Ismail B. P. 509, Errachidia, Morocco.}
\email{i.dahi@edu.umi.ac.ma}

\address{Abderrahmane El Hachimi \newline
Department of Mathematics, Faculty of Sciences,
Mohammed V University of Rabat, Morocco.}
\email{aelahacimi@yahoo.fr}

\address{Delfim F. M. Torres \newline
R\&D Unit CIDMA, Department of Mathematics,
University of Aveiro, 3810-193 Aveiro, Portugal.}
\email{delfim@ua.pt}

\subjclass[2010]{35A01, 35A02, 46E35}

\keywords{Existence; uniqueness; thermistor problem; Sobolev spaces; 
global attractor; $\omega-$limit; invariant set; absobsing set; semi-group.}

\begin{abstract}
We study the existence and uniqueness of a bounded weak solution 
for a triply  nonlinear thermistor problem in Sobolev spaces. 
Furthermore, we prove the existence of an absorbing set and, 
consequently, the universal attractor.
\end{abstract}

\maketitle


\section{Introduction}
\label{section1}

The thermistor was discovered by Michael Faraday in 1833, 
who noticed that the temperature increases when the silver sulfides 
resistance decreases. A lot of studies of the thermistor problem can be found in 
\cite{agarwal2021existence,el2004thermistor,hachimi2005existence,hrynkiv2020optimal,lacey1995thermal}.   

A thermistor is a circuit component that may be used as a current limiter 
or as a temperature sensor. It is, typically, a tiny cylinder, constructed 
of a ceramic substance whose electrical conductivity is highly dependent 
on temperature. The thermistor regulates the heat created by an electrical 
current traveling through a conductor device. Thermistor problems have received 
a lot of attention. We refer the reader to 
\cite{antontsev1994thermistor,cimatti1992existence,hachimi2005existence,g,lacey1995thermal,nanwate2022well}
and references therein. 

Thermistors are commonly used as temperature control devices 
in a wide variety of industrial equipment, ranging from space 
vehicles to air conditioning controllers. They are also often used in the
medical field, for localized and general body temperature measurement, 
in meteorology, for weather forecasting, and in chemical industries 
as process temperature sensors. A detailed description of thermistors 
and their applications in electronics and other industries can be found 
in \cite{ammi2017galerkin}.

There are two types of thermistors: NTC and PTC, which have a positive 
and negative temperature coefficient, respectively. An NTC thermistor 
is a temperature sensor that measures temperature using the resistance 
qualities of ceramic and metal composites. NTC sensors provide a number 
of benefits in terms of temperature sensing, including small size, 
great long-term stability, and high accuracy and precision.
The operation of a PTC electric surge device is as follows: when the circuit's 
current is suddenly increased, the device heats up, causing a dramatic decline 
in its electrical conductivity, effectively shutting off the circuit.
In this paper, we consider the following general nonlocal thermistor problem:
\begin{equation}
\label{(P)}
\quad \left\lbrace 
\begin{array}{lccccc}
\displaystyle \frac{\partial \alpha (v)}{\partial s}-\Delta_m v
=\kappa \frac{f(v)}{(\int_{\Omega}f(v)dx)^{2}},~&\mbox{in}&\quad Q,\\
\displaystyle \alpha(v(x,0))=\alpha (v_0),
&\mbox{ in }&\quad \Omega,\\
\displaystyle v= 0, &\mbox{ on }& \quad \Gamma\times]0,M[.
\end{array}
\right.  
\end{equation}
Problem $\eqref{(P)}$ models the diffusion of the temperature produced  
when an electric current flows crossing a material, where $f(v)$ is the 
electrical resistance of the conductor and 
$\displaystyle{\frac{f(v)}{(\int_{\Omega}f(v)dx)^{2}}}$ represents 
the non-local term of ${(\ref{(P)})}$. Here, $Q=\Omega\times [0,M]$,  
where $\Omega$ is an open bounded subset of $\mathbb{R}^N,~N\ge 1$, 
and  $M$ is a positive constant.

Problem \eqref{(P)} is a generalization of the problem  appearing in the work 
of Kavallaris and Nadzieja \cite{kavallarisblow}. For $\alpha(v)=v$ and $m=2$, 
one gets the classical model of the thermistor problem appearing in the work 
of Lacey \cite{lacey1995thermal}, which is a transformation of the following problem: 
\begin{equation}
\label{191916}
\begin{gathered}\displaystyle
\frac{\partial v}{\partial s}=\nabla \cdot(\kappa(v) \nabla v)+\rho(v)|\nabla \psi|^{2},\\
\displaystyle \quad\quad\quad\nabla \cdot(\rho(v) \nabla \psi)=0, 
\end{gathered}
\end{equation}
where $\kappa$ is the thermal conductivity, $\psi$ is the electrical potential, 
and $\rho(v)$ represents the electrical conductivity, which is normally 
a positive function supposed to drop sharply by several orders of magnitude 
at some critical temperature, and remains essentially zero for larger temperatures. 
This feature is essential for the intended functioning of thermistors 
as thermoelectric switches.

In the case $\alpha (v)=v$ and $m=2$, existence  and uniqueness results
of bounded weak solutions to problem  ${(\ref{(P)})}$ were  established 
in \cite{hachimi2005existence}.  Existence of an optimal control has 
been obtained by many authors with different assumptions on $f$ and $m$. 
We refer, for instance, to \cite{homberg2010optimal}. On the other hand, 
numerical computations of (\ref{(P)}) and (\ref{191916}) have been carried 
out by other authors, see for example 
\cite{ccatal2004numerical,ammi2008numerical,ammi2012optimal,zhou1997numerical}, 
in which the chosen parameters correspond to actual devices. Moreover,  
a study of \eqref{191916} in the case $N = 1$ can be found in \cite{montesinos2002evolution}. 
Here, we extend the existing literature of the nonlocal thermistor problem 
to a triply nonlinear case.

Let $B$ be the area of $\Omega$, $I$ the current such that 
$\displaystyle{\kappa=I^2/B^2}$, and $\Delta_m$ be defined by 	 
$$
\displaystyle{\Delta_m v=\operatorname{div}(\mid \nabla v\mid^{m-2} \nabla v)} 
\;\; \forall m\geq 2.
$$
We further specify the terms in $\eqref{(P)}$. We assume:
\begin{itemize}
\item[(H1)] $\displaystyle{v_0\in L^\infty(\Omega)}$;
	
\item[(H2)] $\displaystyle{\alpha:\mathbb{R}\longrightarrow \mathbb{R}}$
is a Lipschitz continuous increasing function such that 
$\displaystyle{\alpha (0)=0}$ and $\displaystyle{\alpha'(s)\geq\lambda>0}$ 
for all $\displaystyle{s\in \mathbb{R}}$;
	
\item[(H3)] $f$ is a Lipshitz continuous function, 
with compact support, verifying
$$
\displaystyle{\label{A1} \sigma\leq f(s), 
\mbox{ for all }s\in\mathbb{R}, \mbox{ for a positive constant  }\sigma}.
$$
\end{itemize}

The rest of the paper is organized as follows. 
In Section~\ref{section2}, we collect some basic concepts 
and a few known results that are useful to our development. 
Section~\ref{section3} is devoted to the existence of a classical 
solution to the regularized problem of ${(\ref{(P)})}$.
In Section~\ref{section4}, existence of a bounded weak solution 
to the regularized problem is proved. Then,
in Section~\ref{section55}, we provide sufficient conditions 
under which the solution is unique. Existence of an absorbing set, 
as well as the global attractor, are proved in Section~\ref{section6}.
Finally, we present some concluding remarks in Section~\ref{section7}.

  
\section{Preliminaries}
\label{section2}

In this section we collect a few known results that are useful to us.

\begin{definition}[See \cite{blanchard1988study}]
Let $\displaystyle{\alpha}$ be a continuous increasing function 
with $\displaystyle{\alpha(0)=0}$. For $\displaystyle{s\in\mathbb{R}}$ 
we define
$$
\displaystyle{\varPsi\left( s \right)=\int_{0}^{s} \alpha(t)dt}.
$$
The Legendre transform $\displaystyle{\varPsi^*}$ 
of $\displaystyle{\varPsi}$ is defined by
\begin{equation}
\displaystyle{\varPsi^*\left(t\right)
=\sup\limits_{r\in\mathbb{R}}\lbrace rt-\varPsi(t)\rbrace.}
\end{equation}
In particular, we get
\begin{equation}
\label{B2}
\displaystyle{
\varPsi^*\left(\alpha (t)\right)
=t\alpha (t)-\varPsi\left( t \right).}
\end{equation}
\end{definition}

\begin{remark}
\label{yy}
If $\displaystyle{v\in L^\infty(Q) }$, then 
$\displaystyle{\alpha(v)\in L^\infty(Q)}$.	
It turns out, from equality (\ref{B2}),   
that $\displaystyle{\varPsi^*\left(\alpha (v)\right)}$ is also bounded.
\end{remark}

\begin{lemma}[See \cite{Temam}]
\label{Lk}
Assume that $z$ is a non-negative, absolutely continuous function, 
satisfying the following inequality: 
$$
\displaystyle{z'(s)\leq hz(s)+g(s),~~~~\text{for}~~s\geq s_0,}
$$
where $h$ and $g$ are two non-negative integrable functions on 
$\displaystyle{\left[0,M\right]}$. Then, for each $s\in[0,M]$,
$$
\displaystyle{z(s)\leq\exp\left( \int_{0}^{s} h(\tau)d\tau \right)
\cdot\left[z(0)+ \int_{0}^{s} g(\tau)d\tau\right]}.
$$
\end{lemma}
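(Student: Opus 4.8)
The plan is to prove this by the classical integrating-factor argument, carried out at the level of absolutely continuous functions. We read the hypothesis with $s_0=0$, which is consistent with the conclusion's integrals starting at $0$. First I would introduce
\[
\mu(s)=\exp\left(-\int_0^s h(\tau)\,d\tau\right),\qquad s\in[0,M].
\]
Since $h\ge 0$ is integrable on $[0,M]$, the map $s\mapsto\int_0^s h(\tau)\,d\tau$ is non-decreasing and absolutely continuous, so $\mu$ is absolutely continuous on $[0,M]$, strictly positive, satisfies $\exp\left(-\int_0^M h(\tau)\,d\tau\right)\le\mu(s)\le\mu(0)=1$, and has $\mu'(s)=-h(s)\mu(s)$ for a.e.\ $s$.

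Next I would look at the product $w(s)=\mu(s)\,z(s)$. On the compact interval $[0,M]$ both factors are bounded and absolutely continuous, hence $w$ is absolutely continuous; it is therefore differentiable a.e.\ and recovered by integrating $w'$. Applying the product rule a.e.\ and using the differential inequality for $z$ together with $\mu\ge 0$ gives
\[
w'(s)=\mu(s)z'(s)+\mu'(s)z(s)=\mu(s)\bigl(z'(s)-h(s)z(s)\bigr)\le\mu(s)\,g(s)\qquad\text{for a.e.\ }s\in[0,M].
\]
Integrating from $0$ to $s$, and then using $0<\mu(\tau)\le 1$ and $g\ge 0$,
\[
\mu(s)z(s)-z(0)=\int_0^s w'(\tau)\,d\tau\le\int_0^s\mu(\tau)g(\tau)\,d\tau\le\int_0^s g(\tau)\,d\tau.
\]

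Finally, dividing by $\mu(s)>0$ and recalling $1/\mu(s)=\exp\left(\int_0^s h(\tau)\,d\tau\right)$ yields exactly
\[
z(s)\le\exp\left(\int_0^s h(\tau)\,d\tau\right)\left[z(0)+\int_0^s g(\tau)\,d\tau\right],
\]
which is the claimed estimate. I expect the only delicate point — and it is a mild one — to be the measure-theoretic bookkeeping: checking that $w=\mu z$ is absolutely continuous so that the fundamental theorem of calculus applies to it, and that the product rule holds a.e. This is routine here precisely because on $[0,M]$ both $\mu$ and $z$ are bounded absolutely continuous functions. I also note that the non-negativity of $z$ is in fact not used in this argument; only $h,g\ge 0$ and $\mu>0$ enter, so the lemma holds as soon as $z$ is merely absolutely continuous.
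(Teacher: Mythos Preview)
Your argument is correct: the integrating-factor device with $\mu(s)=\exp\bigl(-\int_0^s h\bigr)$, followed by integration of $(\mu z)'$ and the bound $\mu\le 1$, yields precisely the stated estimate, and your remarks on absolute continuity and the a.e.\ product rule are the right technical justifications. The paper itself does not prove this lemma at all; it merely states it with a reference to Temam, so there is no alternative approach in the paper to compare against.
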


\begin{lemma}[Ghidaglia lemma \cite{Temam}] 
\label{li} 
Let $z$ be a positive and absolutely continuous function 
on $]0,\infty[$ such that the inequality 
$$
z^{\prime}+\delta z^{q} \leq \eta
$$  
holds, where $q>1$, $\delta>0$, $\eta \geq 0$. Then,
$$
z(s) \leq\left(\frac{\eta}{\delta}\right)^{1 / q}+(\delta(q-1) s)^{-1 /(q-1)}
$$
for all $s \geq 0$.
\end{lemma}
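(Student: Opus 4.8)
The plan is to recast the scalar inequality as a \emph{pure} power inequality for a shifted unknown and then integrate it explicitly. Set $z_{\infty}:=\left(\eta/\delta\right)^{1/q}$, so that $\eta=\delta z_{\infty}^{q}$, and fix $s>0$. The estimate is trivial when $z(s)\le z_{\infty}$, since the right-hand side of the claimed bound always exceeds $z_{\infty}$, so we may assume $z(s)>z_{\infty}$.

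First I would prove the \emph{a priori} comparison $z(\tau)>z_{\infty}$ for every $\tau\in[0,s]$. If this failed, let $t_{0}:=\sup\{\tau\in[0,s]:z(\tau)\le z_{\infty}\}$; by continuity $z(t_{0})\le z_{\infty}$ and $t_{0}<s$, while $z(\tau)>z_{\infty}$ on $(t_{0},s]$. On that interval $\delta z^{q}>\delta z_{\infty}^{q}=\eta$, hence $z'\le\eta-\delta z^{q}<0$ a.e., and integrating from $t_{0}$ gives $z(\tau)\le z_{\infty}+\int_{t_{0}}^{\tau}(\eta-\delta z^{q})<z_{\infty}$ for $\tau\in(t_{0},s]$, contradicting $z(\tau)>z_{\infty}$. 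Thus $w:=z-z_{\infty}$ is strictly positive on all of $[0,s]$.

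Next, on $[0,s]$ I would use $\eta=\delta z_{\infty}^{q}$ together with the elementary inequality $(a+b)^{q}\ge a^{q}+b^{q}$ for $a,b\ge0$ and $q\ge1$ (applied with $a=z-z_{\infty}$, $b=z_{\infty}$) to obtain
\[
w'=z'\le\eta-\delta z^{q}=-\delta\bigl(z^{q}-z_{\infty}^{q}\bigr)\le-\delta\,(z-z_{\infty})^{q}=-\delta\,w^{q}.
\]
Since $w>0$ and $q>1$, this yields $\bigl(w^{1-q}\bigr)'=(1-q)w^{-q}w'\ge(q-1)\delta$ a.e. on $[0,s]$; integrating from $0$ to $s$ and dropping the positive term $w(0)^{1-q}$ gives $w(s)^{1-q}\ge(q-1)\delta s$, i.e. $w(s)\le\bigl(\delta(q-1)s\bigr)^{-1/(q-1)}$. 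Adding $z_{\infty}$ back yields $z(s)\le\left(\eta/\delta\right)^{1/q}+\bigl(\delta(q-1)s\bigr)^{-1/(q-1)}$, which is the assertion (it is vacuous at $s=0$, where the second term is infinite).

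The only genuinely delicate point is the comparison step: one must argue, using only the absolute continuity of $z$ and the sign of $z'$, that $z$ cannot re-enter the region $\{z>z_{\infty}\}$ after once reaching the level $z_{\infty}$. Once that is in hand, the remainder is a routine explicit integration of a Bernoulli-type inequality, and the bound it produces is — by design — independent of $z(0)$.
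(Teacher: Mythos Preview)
The paper does not supply its own proof of this lemma: it is stated in the preliminaries and attributed to Temam, so there is no in-paper argument to compare against. Your proof is correct and is in fact the standard one. The reduction to $w=z-z_\infty$ via the superadditivity $(a+b)^q\ge a^q+b^q$ ($a,b\ge0$, $q\ge1$) is clean, and the Bernoulli integration of $w'\le-\delta w^q$ is carried out properly.

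One small point worth tightening: the hypothesis places $z$ on the \emph{open} interval $]0,\infty[$, so $z(0)$ and $w(0)$ need not exist. Your comparison step and the final integration both invoke the left endpoint. This is harmless---run the comparison argument on $(0,s]$ (the set $\{z\le z_\infty\}$ is relatively closed there, and its supremum, if the set is nonempty, lies strictly below $s$ and gives the same contradiction), and integrate $(w^{1-q})'\ge\delta(q-1)$ from an arbitrary $\varepsilon\in(0,s)$, drop the positive term $w(\varepsilon)^{1-q}$, and let $\varepsilon\to0^+$. The bound $w(s)^{1-q}\ge\delta(q-1)s$ follows exactly as you wrote, now without reference to $w(0)$. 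With that adjustment the argument is complete.
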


\begin{lemma}[See \cite{alt1983quasilinear}] 
If $\displaystyle{v\in L^{m}\left(0,M;W^{1,m}(\Omega)\right)}$ 
with 
$$
\displaystyle{\dfrac{\partial\alpha (v)}{\partial s}
\in L^{m'}\left(0,M;W^{-1,m'}(\Omega))\right)},
$$ 
then
$$
\displaystyle{
\left\langle\dfrac{\partial\alpha (v)}{\partial s}, 
v\right\rangle_{W^{-1,m'}(\Omega),W^{1,m}(\Omega)}
=\frac{d}{d s} \int_{\Omega} \varPsi^{*}(\alpha(v))}.
$$ 
\end{lemma}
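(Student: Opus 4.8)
The plan is to reduce the asserted identity to the classical chain rule for the convex potential $\varPsi^{*}$ composed with the time-dependent Sobolev function $w:=\alpha(v)$, and then make it rigorous by a time--mollification argument. First I would record the elementary facts coming from the Legendre duality of the Preliminaries: since $\varPsi'=\alpha$ is continuous and, by (H2), strictly increasing, $\varPsi$ is convex and $C^{1}$, the supremum defining $\varPsi^{*}(t)=\sup_{r}\{rt-\varPsi(r)\}$ is attained at $r=\alpha^{-1}(t)$, and differentiating $\varPsi^{*}(\alpha(t))=t\alpha(t)-\varPsi(t)$ gives $(\varPsi^{*})'(t)=\alpha^{-1}(t)$; in particular $(\varPsi^{*})'(\alpha(v))=v$. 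Because $\alpha$ is Lipschitz with $\alpha'\geq\lambda>0$, the inverse $\beta:=\alpha^{-1}$ is Lipschitz as well, so by the Stampacchia chain rule $w=\alpha(v)\in L^{m}(0,M;W^{1,m}(\Omega))$ with $\nabla w=\alpha'(v)\nabla v$, and $v=\beta(w)$ again lies in $L^{m}(0,M;W^{1,m}(\Omega))$. Since $m\geq 2$ and $\Omega$ is bounded, $W^{1,m}(\Omega)\hookrightarrow L^{2}(\Omega)\hookrightarrow W^{-1,m'}(\Omega)$ is a Gelfand triple, so the hypotheses $w\in L^{m}(0,M;W^{1,m}(\Omega))$ and $\partial_{s}w\in L^{m'}(0,M;W^{-1,m'}(\Omega))$ yield, by the standard interpolation lemma, $w\in C([0,M];L^{2}(\Omega))$.

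Formally the identity to be proved is
$$
\frac{d}{ds}\int_{\Omega}\varPsi^{*}(w)=\int_{\Omega}(\varPsi^{*})'(w)\,\partial_{s}w=\big\langle\partial_{s}w,\beta(w)\big\rangle_{W^{-1,m'}(\Omega),W^{1,m}(\Omega)}.
$$
To justify it I would extend $w$ and $\partial_{s}w$ suitably beyond $[0,M]$ and mollify in time, $w_{\varepsilon}=w*\rho_{\varepsilon}$, so that on every compact subinterval $w_{\varepsilon}\in C^{1}$ with values in $W^{1,m}(\Omega)$ and $\partial_{s}w_{\varepsilon}=(\partial_{s}w)*\rho_{\varepsilon}$. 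For these time-smooth functions the chain rule is classical: $\beta(w_{\varepsilon}(s))\in W^{1,m}(\Omega)$ (again Stampacchia), the map $s\mapsto\int_{\Omega}\varPsi^{*}(w_{\varepsilon}(s))$ is $C^{1}$, and differentiating under the integral sign gives $\frac{d}{ds}\int_{\Omega}\varPsi^{*}(w_{\varepsilon})=\langle\partial_{s}w_{\varepsilon},\beta(w_{\varepsilon})\rangle$; integrating over $[s_{1},s_{2}]$ yields
$$
\int_{\Omega}\varPsi^{*}(w_{\varepsilon}(s_{2}))-\int_{\Omega}\varPsi^{*}(w_{\varepsilon}(s_{1}))=\int_{s_{1}}^{s_{2}}\big\langle\partial_{s}w_{\varepsilon},\beta(w_{\varepsilon})\big\rangle\,ds.
$$

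It then remains to pass to the limit $\varepsilon\to 0$. On the left, $w_{\varepsilon}(s_{i})\to w(s_{i})$ in $L^{2}(\Omega)$ by the $C([0,M];L^{2})$--continuity, and since $\varPsi^{*}\geq 0$, $\varPsi^{*}(0)=0$, and $(\varPsi^{*})'=\beta$ is Lipschitz (so $\varPsi^{*}$ has at most quadratic growth), the integrals converge. On the right, $\partial_{s}w_{\varepsilon}\to\partial_{s}w$ strongly in $L^{m'}(0,M;W^{-1,m'}(\Omega))$, while $\beta(w_{\varepsilon})$ is bounded in $L^{m}(0,M;W^{1,m}(\Omega))$ (by $\|\nabla\beta(w_{\varepsilon})\|\leq\operatorname{Lip}(\beta)\,\|\nabla w_{\varepsilon}\|$) and converges to $\beta(w)$ strongly in $L^{m}(0,M;L^{m}(\Omega))$, hence $\beta(w_{\varepsilon})\rightharpoonup\beta(w)$ weakly in $L^{m}(0,M;W^{1,m}(\Omega))$; the strong--weak pairing then converges. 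This gives the integrated identity for all $0<s_{1}<s_{2}<M$, which extends to the endpoints by continuity, and since $s\mapsto\langle\partial_{s}w,\beta(w)\rangle$ belongs to $L^{1}(0,M)$ the function $s\mapsto\int_{\Omega}\varPsi^{*}(w(s))$ is absolutely continuous with a.e. derivative $\langle\partial_{s}w,\beta(w)\rangle$, which is exactly the claim after substituting $w=\alpha(v)$ and $\beta(w)=v$. The step I expect to be the most delicate is precisely this transfer of the chain rule from the mollified functions to the limit: one has to choose the time-extension so that no spurious boundary contributions appear, secure both the continuity $w\in C([0,M];L^{2}(\Omega))$ and the uniform $L^{m}(0,M;W^{1,m}(\Omega))$--bound on $\beta(w_{\varepsilon})$, and correctly identify the weak limit of $\beta(w_{\varepsilon})$; everything else is routine once the Gelfand-triple framework and the elementary Legendre-transform identities are in place.
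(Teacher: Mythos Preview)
The paper does not supply its own proof of this lemma; it simply cites Alt--Luckhaus \cite{alt1983quasilinear} and uses the result as a black box. Your argument is a correct and essentially standard reconstruction of the proof one finds in that reference (or in closely related treatments of doubly nonlinear parabolic equations): identify $(\varPsi^{*})'=\alpha^{-1}$ from the Legendre duality, place the problem in the Gelfand triple $W^{1,m}_{0}(\Omega)\hookrightarrow L^{2}(\Omega)\hookrightarrow W^{-1,m'}(\Omega)$, regularize in time, apply the classical chain rule, and pass to the limit via a strong--weak pairing. The only cosmetic point is that the duality $\langle\cdot,\cdot\rangle_{W^{-1,m'},W^{1,m}}$ really lives on $W^{1,m}_{0}(\Omega)$ rather than $W^{1,m}(\Omega)$, which is consistent with the Dirichlet condition $v=0$ on $\partial\Omega$ in problem~\eqref{(P)}; you should state the Gelfand triple with $W^{1,m}_{0}$ so that the pivot identification $L^{2}\hookrightarrow (W^{1,m}_{0})'=W^{-1,m'}$ is the canonical one. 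Your own caveat about the time-extension and the identification of the weak limit of $\beta(w_{\varepsilon})$ is well placed, but those steps go through exactly as you outline.
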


In order to study the existence of the global (universal) attractor, 
we introduce the following definitions.

\begin{definition}[See \cite{Temam}]
Let us consider $\displaystyle{\mathcal{B}\subset F}$ 
and $\displaystyle{\mathcal{U}}$ an open bounded set such that 
$\displaystyle{\mathcal{U}\subset\mathcal{B}}$. Then  
$\displaystyle{\mathcal{B}}$ is an absorbing set 
in $\displaystyle{\mathcal{U}}$ if the orbit of each bounded 
set of $\displaystyle{\mathcal{U}}$ enters into $\displaystyle{\mathcal{B}}$ 
after a given period of time (which may depend on the set):
$$
\begin{array}{ccc}\displaystyle{
\forall \mathcal{B}_{0} \subset \mathcal{U}, 
\quad \mathcal{B}_{0}} \text { bounded, } \\
\displaystyle{\exists s_{0}\left(\mathcal{B}_{0}\right)} 
\text { such that }\displaystyle{ S(s) \mathcal{B}_{0} 
\subset \mathcal{B}, 
\quad \forall s \geq s_{0}\left(\mathcal{B}_{0}\right)}.
\end{array} 
$$
\end{definition}

\begin{definition}[See \cite{Temam}]
\label{L999}
The set $A\subset F$ is said to be an universal attractor for the semigroup
$\displaystyle{\left(S(s)\right)_{s\ge 0}}$, if the following conditions hold:
\begin{itemize}
\item[(1)] $A\subset F$ is a nonempty invariant compact set,
	
\item[(2)] 	the set  $A\subset F$ attracts any bounded set
$\mathcal{B}\subset F$, that is,
$$
\displaystyle{	dist\left(S(s) \mathcal{B}, A\right) \rightarrow 0 }
\text { as } \displaystyle{s \rightarrow+\infty,} \\
\mbox{ such that } dist(D, B)
= {\operatorname{\sup}_{a \in D}}~{\operatorname{\inf}_{b \in B}}~\|a-b\|_F.
$$
\end{itemize}	
\end{definition}


\section{Regularized problems} 
\label{section3}

In this section, we first present our approximation scheme. 
Then we proceed to prove the existence of a weak solution 
to our regularized problem. To design our regularized scheme, 
we consider
\begin{equation} 
\label{ke1}
\begin{array}{ccccc}
\displaystyle\alpha_r  \mbox{ is of class }\displaystyle{ \mathcal{C}^1(\mathbb{R})} 
\mbox{ where }\displaystyle{ 0<\lambda<\alpha_r^\prime,} ~\\
\displaystyle{\alpha_r (0)=0, ~\alpha_{r} \longrightarrow \alpha}\mbox{ in }
\displaystyle{ \mathcal{C}_{loc}(\mathbb{R})} \mbox{ and }
\displaystyle{\left|\alpha_{r}\right| \leq|\alpha| },\\
\displaystyle{f_r} \mbox{ is of class }
\displaystyle \mathcal{C}^{\infty}(\mathbb{R}),\\ 
f_r \rightarrow f,\mbox{in } L^1(Q) \mbox{ and a.e in }Q, \\ 
\displaystyle{ f_r} \mbox{ satisfies  }\left(H3\right).
\end{array}
\end{equation}
The initial condition is regularized as in the proof 
of \cite[Proposition 3, p.~761]{filo1992global}, that is,
\begin{equation}
\label{w}
\begin{array}{cccc}
\displaystyle{v_{r, 0} \in \mathcal{C}^{\infty}_c(\Omega)} 
\mbox{  such that }\displaystyle{ v_{r,0 } \rightarrow v_{0} }
\mbox{ in }\displaystyle{L^{\infty}(\Omega),
~\left\|v_{r,0 }\right\|_{L^{\infty}(\Omega)} 
\leq\left\|v_{0}\right\|_{L^{\infty}(\Omega)}+1.}
\end{array}
\end{equation}
Our regularized problems are then given by
\begin{equation}
\label{P_r}
\quad \left\lbrace 
\begin{array}{lccccccc}	
\displaystyle \frac{\partial \alpha_r (v_r)}{\partial s}-\Delta_m^r v
=\kappa \frac{f_r(v_r)}{(\int_{\Omega}f_r(v_r)dx)^{2}},
&\mbox{in}& \quad Q=\Omega\times[0,M],\\
\displaystyle \alpha_r(v_{r,x}(0))=\alpha_r(v_{r,0}),
&\mbox{in}& \quad \Omega,\\
v_r= 0, &\mbox{on}&\quad \Gamma\times]0,M[, 
\end{array}
\right.  
\end{equation}
where $\displaystyle{\Delta_m^r v
=\operatorname{div}\left(\left(\mid \nabla v\mid^{2}
+r\right)^{\dfrac{m-2}{2}}\nabla v\right)}$, 
$\displaystyle{m\geq 2}$.

\begin{theorem}
\label{qsdlm}
Assume that hypotheses $(H1)$--$(H3)$ hold. Then there exists a 
solution to problem (\ref{P_r}).
\end{theorem}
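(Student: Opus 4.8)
The plan is to construct a solution of the regularized problem \eqref{P_r} by a Galerkin approximation in space combined with an ODE existence argument, and then pass to the limit in the finite-dimensional parameter. First I would fix $r>0$ and notice that, since $\alpha_r\in\mathcal C^1(\mathbb R)$ with $\alpha_r'\ge\lambda>0$, the map $\alpha_r$ is a $\mathcal C^1$-diffeomorphism of $\mathbb R$; writing $w=\alpha_r(v_r)$ we may regard $v_r=\alpha_r^{-1}(w)$ and recast \eqref{P_r} as an equation for $w$ whose time derivative appears linearly. Choosing a smooth basis $(e_j)_{j\ge1}$ of $W_0^{1,m}(\Omega)$ (for instance eigenfunctions of the $p$-Laplacian, or simply of $-\Delta$ since $\Omega$ is bounded and everything here is smooth), I would look for $w_n(s)=\sum_{j=1}^n g_{j,n}(s)e_j$ solving the projected system
\begin{equation}
\label{galerkin}
\Bigl\langle \frac{d}{ds}w_n,e_j\Bigr\rangle
+\Bigl\langle \bigl(|\nabla \alpha_r^{-1}(w_n)|^2+r\bigr)^{\frac{m-2}{2}}\nabla \alpha_r^{-1}(w_n),\nabla e_j\Bigr\rangle
=\kappa\,\frac{\langle f_r(\alpha_r^{-1}(w_n)),e_j\rangle}{\bigl(\int_\Omega f_r(\alpha_r^{-1}(w_n))\,dx\bigr)^2},
\end{equation}
for $j=1,\dots,n$, with initial data the projection of $\alpha_r(v_{r,0})$. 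The right-hand side is well defined and locally Lipschitz in the coefficients because $f_r$ is smooth and bounded below by $\sigma>0$ (hypothesis (H3)), so the denominator never vanishes; hence the Cauchy--Peano/Pic--Lindelöf theorem gives a local $\mathcal C^1$ solution on a maximal interval $[0,s_n)$.

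Next I would derive the a priori estimates that both extend the solution to all of $[0,M]$ and provide compactness. Testing \eqref{galerkin} with $w_n$ itself (i.e. multiplying by $g_{j,n}$ and summing), using the fact that $\int_\Omega f_r(v_r)\,dx\ge \sigma|\Omega|$ so the nonlocal term is bounded by $\kappa/(\sigma|\Omega|)\,\|f_r\|_\infty$, and using monotonicity of $\alpha_r$ together with Lemma (the Alt--Luckhaus identity, $\langle \partial_s\alpha_r(v_r),v_r\rangle=\frac{d}{ds}\int_\Omega\varPsi_r^*(\alpha_r(v_r))$) yields a bound of the form
\begin{equation}
\label{apriori}
\frac{d}{ds}\int_\Omega \varPsi_r^*(\alpha_r(v_r))+c\,\|\nabla v_r\|_{L^m(\Omega)}^m\le C,
\end{equation}
from which Lemma~\ref{Lk} (or direct integration) gives $v_r$ bounded in $L^m(0,M;W_0^{1,m}(\Omega))$ and $\varPsi_r^*(\alpha_r(v_r))$ bounded in $L^\infty(0,M;L^1(\Omega))$, uniformly in $n$. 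Combining this with the equation gives a bound on $\partial_s\alpha_r(v_r)$ in $L^{m'}(0,M;W^{-1,m'}(\Omega))$. These bounds preclude blow-up, so $s_n=M$, and they supply weak limits: up to a subsequence $v_r^n\rightharpoonup v_r$ in $L^m(0,M;W_0^{1,m}(\Omega))$, $\alpha_r(v_r^n)\rightharpoonup \chi$ with $\partial_s\alpha_r(v_r^n)\rightharpoonup \partial_s\chi$, and by the Aubin--Lions--Simon lemma (using the compact embedding $W_0^{1,m}\hookrightarrow L^m\hookrightarrow W^{-1,m'}$ applied after noting $\alpha_r$ is bi-Lipschitz so $\alpha_r(v_r^n)$ is bounded in $L^m(0,M;W_0^{1,m})$ as well) one obtains strong $L^m(Q)$ and a.e. convergence of $v_r^n$ to $v_r$, whence $\chi=\alpha_r(v_r)$.

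Finally I would pass to the limit in \eqref{galerkin}. The a.e. convergence of $v_r^n$, continuity of $f_r$, the lower bound $f_r\ge\sigma$, and dominated convergence handle the nonlocal right-hand side. For the principal part, the standard Minty--Browder monotonicity trick applies: the operator $\xi\mapsto(|\xi|^2+r)^{(m-2)/2}\xi$ is monotone and continuous, and combining weak convergence of $\nabla v_r^n$ with the energy inequality from \eqref{apriori} (lower semicontinuity plus the identification of the limit of $\int_Q (|\nabla v_r^n|^2+r)^{(m-2)/2}|\nabla v_r^n|^2$ via the time-derivative term) identifies the weak limit of the flux with $(|\nabla v_r|^2+r)^{(m-2)/2}\nabla v_r$. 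The initial condition passes to the limit because $\alpha_r(v_r^n)\in \mathcal C([0,M];L^2(\Omega))$ with continuous dependence. Assembling these pieces shows $v_r$ solves \eqref{P_r}. The main obstacle I anticipate is the monotonicity/limit step for the $m$-Laplacian-type term: one must carefully control the cross term coming from $\langle\partial_s\alpha_r(v_r),v_r\rangle$ using the Alt--Luckhaus identity and the convergence $\int_\Omega\varPsi_r^*(\alpha_r(v_r^n(M)))\to\int_\Omega\varPsi_r^*(\alpha_r(v_r(M)))$, which in turn needs the strong a.e. convergence established above; getting this chain of inequalities to close is the delicate point, whereas the ODE existence, the a priori estimates, and the treatment of the (globally bounded, non-vanishing) nonlocal term are comparatively routine.
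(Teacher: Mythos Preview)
Your Galerkin--Minty plan is a valid route to a weak solution of \eqref{P_r}, but it is \emph{not} the approach the paper takes. The paper's proof is essentially two lines: first Lemma~\ref{A3} furnishes the a~priori bound $\|v_r\|_{L^\infty(Q)}\le C(M,\|v_0\|_{L^\infty(\Omega)})$, and then, because the regularization was engineered so that the operator $\Delta_m^r$ is uniformly parabolic (the $+r$ kills the degeneracy), $\alpha_r\in\mathcal C^1$ with $\alpha_r'\ge\lambda>0$, $f_r\in\mathcal C^\infty$, and the initial datum is smooth with compact support, the authors simply invoke the classical quasilinear parabolic theory of Lady\v{z}enskaja--Solonnikov--Ural'ceva to obtain a \emph{classical} solution. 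No Galerkin scheme, no Aubin--Lions, no Minty trick for this step.

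What each approach buys: the paper's route is far shorter and yields classical regularity, which they actually need downstream (Lemma~\ref{L52} multiplies the equation by $\partial_s v_r$, and Lemma~\ref{A3} itself multiplies by powers of $(\alpha_r(v_r)-\alpha_r(s_0))^+$; both manipulations tacitly use that $v_r$ is a genuine classical solution). Your route is more self-contained and avoids the black-box appeal to \cite{ladyzhenskaya1967linear}, but it is heavier and yields only a weak solution. One small wrinkle in your sketch: testing the $w_n$-equation with $w_n$ gives $\tfrac12\frac{d}{ds}\|w_n\|_{L^2}^2$, not the Legendre-transform quantity $\frac{d}{ds}\int_\Omega\varPsi_r^*(\alpha_r(v_r))$; the latter comes from testing with $v_r=\alpha_r^{-1}(w_n)$, which is not in the Galerkin span in general. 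Either test function produces a workable energy estimate (since $\alpha_r$ is bi-Lipschitz and $\nabla w_n=\alpha_r'(v_r^n)\nabla v_r^n$), so this is a fixable inconsistency rather than a real gap, but you should be precise about which functional you are controlling.
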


The following lemma plays a key role in the proof of Theorem~\ref{qsdlm}.
\begin{lemma}
\label{A3}
For all $r>0$, we have 
$$
\displaystyle{\parallel v_r\parallel_{L^\infty(Q)}
\leq C(M,\parallel v_0\parallel_{L^\infty(\Omega)}),} 
$$
where $ \displaystyle{C(M,\parallel v_0\parallel_{L^\infty(\Omega)})}$ 
is a positive constant.
\end{lemma}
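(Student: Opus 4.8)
The plan is to reduce the statement to an $L^\infty$ bound for a \emph{nondegenerate} quasilinear parabolic equation whose source term is bounded uniformly in $r$, and then to produce that bound by comparison with explicit barriers. First I would observe that the nonlocal source is controlled: since $f_r$ satisfies $(H3)$, $\int_\Omega f_r(v_r)\,dx\ge\sigma|\Omega|>0$, while $f_r$ is Lipschitz with compact support and $f_r\to f$, so that $\|f_r\|_{L^\infty(\mathbb{R})}\le C_f$ for some $C_f$ independent of $r$ (which the regularization \eqref{ke1} can be arranged to satisfy). Consequently
$$
0\le g_r:=\kappa\,\frac{f_r(v_r)}{\left(\int_\Omega f_r(v_r)\,dx\right)^{2}}\le\frac{\kappa\,C_f}{\sigma^{2}|\Omega|^{2}}=:K,
$$
with $K$ independent of $r$, and $v_r$ solves $\partial_s\alpha_r(v_r)-\Delta_m^r v_r=g_r$ in $Q$.

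Next I would exhibit barriers. For the upper bound, take the spatially constant, affine-in-time function $\bar v(s):=\|v_0\|_{L^\infty(\Omega)}+1+(K/\lambda)s$; then $\Delta_m^r\bar v=0$ and, since $\alpha_r'>\lambda$ by \eqref{ke1}, $\partial_s\alpha_r(\bar v)=\alpha_r'(\bar v)\,K/\lambda\ge K\ge g_r$, so $\bar v$ is a supersolution; moreover $\bar v(0)=\|v_0\|_{L^\infty(\Omega)}+1\ge\|v_{r,0}\|_{L^\infty(\Omega)}$ by \eqref{w}, and $\bar v\ge0=v_r$ on $\Gamma\times]0,M[$. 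For the lower bound, the negative constant $\underline v:=-\big(\|v_0\|_{L^\infty(\Omega)}+1\big)$ satisfies $\partial_s\alpha_r(\underline v)-\Delta_m^r\underline v=0\le g_r$, lies below $v_{r,0}$ on $\Omega$ and below $0=v_r$ on $\Gamma\times]0,M[$, hence is a subsolution. Thus $\underline v$ and $\bar v$ bracket $v_r$ on the parabolic boundary.

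Then I would invoke the comparison principle for the operator $v\mapsto\partial_s\alpha_r(v)-\operatorname{div}\big((|\nabla v|^{2}+r)^{(m-2)/2}\nabla v\big)$, which for fixed $r>0$ is uniformly parabolic because $(|\nabla v|^{2}+r)^{(m-2)/2}\ge r^{(m-2)/2}>0$. Concretely one tests the difference of the relations for $v_r$ and $\bar v$ with $(v_r-\bar v)_+$: the strict monotonicity of $\xi\mapsto(|\xi|^{2}+r)^{(m-2)/2}\xi$ makes the diffusion contribution nonnegative, the monotonicity of $\alpha_r$ together with the Alt--Luckhaus chain-rule identity recalled above handles the time term, and a Gronwall-type argument (Lemma~\ref{Lk}) forces $(v_r-\bar v)_+\equiv0$; the symmetric computation gives $\underline v\le v_r$. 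Altogether $\|v_r\|_{L^\infty(Q)}\le\|v_0\|_{L^\infty(\Omega)}+1+(K/\lambda)M=:C\big(M,\|v_0\|_{L^\infty(\Omega)}\big)$, finite and independent of $r$, which is the claim.

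The step I expect to be most delicate is making the comparison rigorous, specifically controlling $\int_Q\partial_s\big(\alpha_r(v_r)-\alpha_r(\bar v)\big)(v_r-\bar v)_+$: this requires either the regularity of the approximate solutions furnished by Theorem~\ref{qsdlm} or the integration-by-parts identity of the Alt--Luckhaus lemma, and one must also ensure that whatever truncations are used to construct $v_r$ in that proof are rendered inactive by precisely this estimate, so the bound should really be derived on the truncated problem first. An equivalent route that avoids barriers is a Stampacchia truncation — testing with $(v_r-k)_+$ for $k\ge\|v_{r,0}\|_{L^\infty(\Omega)}$ and closing the resulting level-set inequality — but since $g_r$ is already bounded the comparison argument is the shortest path.
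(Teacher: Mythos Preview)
Your argument is correct and gives the claimed bound with an explicit constant, but it follows a genuinely different route from the paper. The paper does not use barriers or comparison: instead it multiplies the equation by $\big[(\alpha_r(v_r)-\alpha_r(s_0))^+\big]^{p+1}$, discards the (nonnegative) diffusion contribution, bounds the source via $(H3)$ and the compact support of $f_r$, and obtains a differential inequality for $z_p(s)=\|(\alpha_r(v_r)-\alpha_r(s_0))^+\|_{L^{p+2}(\Omega)}$ which yields $z_p(s)\le z_p(0)+CM$; sending $p\to\infty$ gives the upper $L^\infty$ bound, and the lower bound follows by applying the same computation to $u_r=-v_r$. In other words, the paper runs an $L^p$--to--$L^\infty$ argument, while you run a maximum-principle argument.

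What each approach buys: your barrier method is shorter, gives the constant explicitly as $\|v_0\|_{L^\infty(\Omega)}+1+(K/\lambda)M$, and exploits the sign $g_r\ge 0$ to make the subsolution a mere constant. It does, however, rely on a comparison principle for the regularised operator; you correctly identify this as the delicate step, and indeed the paper later proves essentially this comparison (Lemma~\ref{533}) for the limiting problem via the test function $H_\mu(v-u)$, so the machinery is available. The paper's $L^p$ iteration avoids invoking comparison as a black box and proceeds by pure energy estimates, which fits more naturally with the rest of its \emph{a priori} programme, at the cost of a slightly longer computation and a less explicit constant. Your closing remark that a Stampacchia truncation is an equivalent route is apt: the paper's choice of test function is precisely a power of such a truncation.
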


\begin{proof}
Multiplying the first equation of  problem (\ref{P_r}) 
by $\displaystyle{\left[\left( \alpha_r(v_r)
-\alpha_r(s_0)  \right)^+\right]^{p+1}}$ 
($s_0$ is a positive constant where $\mid v_r\mid >s_0$) 
and  integrating over $\displaystyle{\Omega}$, we get
$$	
\begin{aligned}
&\displaystyle{\int_\Omega \frac{\partial \alpha_r (v_r)}{\partial s}
\left[\left( \alpha_r(v_r)-\alpha_r(s_0) \right)^+\right]^{p+1}
-\int_\Omega\Delta_m^r v_r  \left[\left( \alpha_r(v_r)
-\alpha_r(s_0)  \right)^+\right]^{p+1}}\\
&\quad \displaystyle{=\int_\Omega \frac{\kappa \cdot f_r(v_r)}{
\left(\int_{\Omega}f_r(v_r)dx\right)^{2}}  
\left[\left( \alpha_r(v_r)-\alpha_r(s_0)  \right)^+\right]^{p+1}.}
\end{aligned}
$$
So, we have
\begin{multline*}
\displaystyle\dfrac{1}{p+2} 
\int_\Omega 	\frac{\partial }{\partial s} 
\left[\left( \alpha_r(v_r)-\alpha_r(s_0)  \right)^+\right]^{p+2}
=\int_\Omega\Delta_m^r v_r  \left[\left( 
\alpha_r(v_r)-\alpha_r(s_0)  \right)^+\right]^{p+1}\\
+\int_\Omega \frac{\kappa
\cdot f_r(v_r)}{\left(\int_{\Omega}f_r(v_r)dx\right)^{2}}  
\left[\left( \alpha_r(v_r)-\alpha_r(s_0)  \right)^+\right]^{p+1}.
\end{multline*}
Then,
\begin{multline}
\displaystyle \dfrac{1}{p+2} 
\frac{\partial }{\partial s}\int_\Omega
\left[\left( \alpha_r(v_r)-\alpha_r(s_0)\right)^+\right]^{p+2}
=\int_\Omega\Delta_m^r v_r  
\left[\left( \alpha_r(v_r)-\alpha_r(s_0)\right)^+\right]^{p+1}\\
+\int_\Omega \frac{\kappa\cdot f_r(v_r)}{
\left(\int_{\Omega}f_r(v_r)dx\right)^{2}}  
\left[\left( \alpha_r(v_r)-\alpha_r(s_0)  
\right)^+\right]^{p+1}.	
\end{multline}
On the other hand, we have
$$
\begin{aligned}
&\displaystyle{\int_\Omega\Delta_m^r v_r  
\left[\left( \alpha_r(v_r)-\alpha_r(s_0)  \right)^+\right]^{p+1} }\\
&=\displaystyle{\int_\Omega\operatorname{div}\left(
\left(\mid\nabla v_r\mid^{2}+r\right)^{\dfrac{m-2}{2}}
\nabla v_r\right)\left[\left( \alpha_r(v_r)-\alpha_r(s_0)\right)^+\right]^{p+1}}\\
&=\displaystyle{-(p+1)\int_\Omega\left(
\left(\mid\nabla v_r\mid^{2}+r\right)^{\dfrac{m-2}{2}}
\mid\nabla v_r\mid^2\right) \alpha' _r(v_r)  
\left[\left( \alpha_r(v_r)-\alpha_r(s_0)  \right)^+\right]^{p}}\\
&\quad +\displaystyle{\int_{\partial \Omega}\left(
\left(\mid\nabla v_r\mid^{2}+r\right)^{\dfrac{m-2}{2}}
\frac{\partial  v_r}{\partial \nu}\right)\left[\left( 
\alpha_r(v_r)-\alpha_r(s_0)  \right)^+\right]^{p+1}}.
\end{aligned}
$$
Since $\displaystyle{ 
\left(\mid\nabla v_r\mid^{2}
+r\right)^{\dfrac{m-2}{2}}
\mid\nabla v_r\mid^2\geq0}$ 
and $\alpha' _r>0$, we get
\begin{equation}
\begin{aligned}
&\displaystyle{\dfrac{1}{p+2} \frac{\partial }{\partial s}
\int_\Omega 	 \left[\left( \alpha_r(v_r)
-\alpha_r(s_0)  \right)^+\right]^{p+2}}\\
&\leq \displaystyle{ \int_{\partial \Omega}
\left((\mid\nabla v_r\mid^{2}+r)^{\dfrac{m-2}{2}}
\frac{\partial  v_r}{\partial \nu}\right)\left[
\left( \alpha_r(v_r)-\alpha_r(s_0)\right)^+\right]^{p+1} }\\
&\quad +\displaystyle{ \int_\Omega\frac{\kappa\cdot f_r(v)}{
\left(\int_{\Omega}f_r(v)dx\right)^{2}}  
\left[\left( \alpha_r(v_r)-\alpha_r(s_0)  
\right)^+\right]^{p+1}}.	
\end{aligned}
\end{equation}
By using $(H3)$, we have 
$$
\begin{aligned}
&\displaystyle\int_\Omega \frac{\kappa
\cdot f_r(v_r)}{(\int_{\Omega}f_r(v_r)dx)^{2}}  
\left[\left( \alpha_r(v_r)-\alpha_r(s_0)  
\right)^+\right]^{p+1}\\
&\qquad \leq \dfrac{\kappa}{\left(\sigma\cdot meas(\Omega)\right)^2}
\int_\Omega f_r(v_r)\left[\left( \alpha_r(v_r)
-\alpha_r(s_0)  \right)^+\right]^{p+1}.
\end{aligned}
$$
Since $f_r $ satisfies (H3), it yields
\begin{equation*}
\begin{split}
f_r\left(v_r(x,t)\right)
&=f_r\left(v_r(x,t)\right)\chi_{\lbrace v_r(x,t) 
\in supp(f)\rbrace}
+f_r\left(v_r(x,t)\right)\chi_{\lbrace v_r(x,t)
\notin supp(f)\rbrace}\\
&\leq f_r\left(v_r(x,t)\right)
\chi_{\lbrace v_r(x,t) \in supp(f)\rbrace}.
\end{split}
\end{equation*}
If  $v_r(x,t) \in supp(f)$, then it follows that 
$\left(v_r(x,t)\right)_r$ is bounded. Thus, 
there exists a positive constant $C_0$ such that 
$$
\displaystyle{\int_\Omega f_r(v_r)\left[\left( 
\alpha_r(v_r)-\alpha_r(s_0)  \right)^+\right]^{p+1}
\leq C_0 \int_\Omega \left[\left( \alpha_r(v_r)
-\alpha_r(s_0)  \right)^+\right]^{p+1}.}
$$
Keeping that in mind, we have 
for a positive constant $C_1$ that
\begin{equation}
\label{A2}
\displaystyle{
\dfrac{1}{p+2} \frac{\partial }{\partial s}
\int_\Omega 	 \left[\left( \alpha_r(v_r)-\alpha_r(s_0)  
\right)^+\right]^{p+2}\leq C_1\int_\Omega \left[
\left( \alpha(v_r)-\alpha_r(s_0)  \right)^+\right]^{p+1}}.
\end{equation}
From H\"{o}lder's inequality, there exists 
positive constants $C_j$, $j= 2,3,4$, such that
$$
\begin{aligned}
\displaystyle{
\int_\Omega \left[\left(\alpha_r(v_r)-\alpha_r(s_0)\right)^+\right]^{p+1}}
&\leq\displaystyle{\left(meas(\Omega)\right)^{\dfrac{1}{p+1}}
\cdot \left(\int_{\Omega}\left[(\alpha_{r}(v_r)
-\alpha_{r}(s_0))^+\right]^{{p+2}} \right)^{\dfrac{p+1}{p+2}}}\\
&\leq \displaystyle{C_2\left[  z_p(s)  \right]^{p+1}},
\end{aligned}
$$ 
where $\displaystyle{z_p(s)
:=\parallel (\alpha_r(v_r)-\alpha_r(s_0))^+\parallel_{L^{p+2}(\Omega)}}$.
In view of (\ref{A2}), we have
$$
\displaystyle{\dfrac{1}{p+2} \frac{\partial }{\partial s}
\int_\Omega \left[\left( \alpha_r(v_r)-\alpha_r(s_0)  
\right)^+\right]^{p+2}\leq C_3 \left[z_p(s) \right]^{p+1}}.
$$
Then, 
\begin{equation}
\displaystyle{
\dfrac{1}{p+2}\frac{\partial }{\partial s} 
\left[z_p(s) \right]^{p+2}\leq C_3 \left[z_p(s) \right]^{p+1}},
\end{equation}
and hence
$$
\displaystyle{\frac{\partial }{\partial s} \left[z_p(s) \right]
\leq C_3},
$$
from which it follows that
$$ 
\displaystyle{\left[z_p(s) -z_p(0)\right]\leq C_3 M,}
$$ 
which implies 
$$
\displaystyle{z_p(s)\leq z_p(0)+C_3M.}
$$
Letting $p$ go to infinity, we obtain that
\begin{equation}	
\label{H2}
\displaystyle{
\parallel\left( \alpha_r(v_r)-\alpha_r(s_0)  
\right)^+\parallel_{L^\infty(\Omega)}\leq C_4.} 
\end{equation}
Now, let $u_r=-v_r$, and consider the following problem:
\begin{equation}
\label{P_r1}
\quad \left\lbrace 
\begin{array}{lcccccc}
\displaystyle \frac{\partial \tilde{\alpha}_r (u_r)}{\partial s}
-\Delta_m^r u_r=\kappa \frac{\tilde{f}_r(u_r)}{
\left(\int_{\Omega}\tilde{f}_r(v_r)dx\right)^{2}}
=:\tilde{g}(u_r)~~~~~~&\mbox{in}&~~~~Q,\\
\displaystyle \tilde{\alpha}_{r} (u_{x,r}(0))
=\tilde{\alpha}_{r}(u_0) &\mbox{in}&\quad \Omega,\\
\displaystyle u_r= 0
&\mbox{on}&\quad \Gamma\times]0,M[,
\end{array}
\right.  
\end{equation}
where $\displaystyle{\tilde{\alpha}_r(\tau)
=-\alpha_r(-\tau)}$, $\displaystyle{\tilde{g}_r(\tau)
=-g_r(-\tau)}$ and 
$\displaystyle{\tilde{f}_r(\tau)=-f_r(-\tau)}$.  
Those functions satisfy the same conditions verified  
by $\alpha$,  $g$ and $f$, respectively. The same reasoning 
done to get (\ref{H2}), shows that 
\begin{equation}
\label{H3}
\displaystyle{
\parallel\left( \tilde{\alpha}_r(u_r)-\tilde{\alpha}_r(s_0)  
\right)^+\parallel_{L^\infty(\Omega)}\leq C_5,}
\end{equation}
which is equivalent to 
$$
\displaystyle{\parallel\left( -\alpha_r(-v_r(s))
+\alpha_r(-s_0)  
\right)^+\parallel_{L^\infty(\Omega)}\leq C_5. }
$$
From (\ref{H2}) and (\ref{H3}), we deduce that there 
exists a positive constant $C$ such that
$$
\displaystyle{
\parallel v_r (s) \parallel_{L^\infty(\Omega)}
\leq  C(M,\parallel v_0\parallel_{L^\infty(\Omega)}), 
\quad \text{ for all } s\in[0,M].}
$$ 
The lemma is proved.
\end{proof}

\begin{proof}[Proof of Theorem~\ref{qsdlm}]
From Lemma~\ref{A3} and hypotheses $(H1)$--$(H3)$, 
we conclude, from the classical results of Ladyzenskaya 
(see \cite[pp.~457--459]{ladyzhenskaya1967linear}),  
with the existence of a classical solution 
to the regularized problem (\ref{P_r}).
\end{proof}


\section{Existence of a weak solution} 
\label{section4}

\begin{definition}
We say that $\displaystyle{ v\in  L^\infty(Q) 
\cap L^{m}\left(0,M;W^{1,m}(\Omega)\right) 
\cap L^{\infty}\left(t,M;W^{1,m}(\Omega)\right)}$, $t>0$,  
is a bounded weak solution of problem  (\ref{(P)}), 
if it satisfies the following identity:
\begin{equation}
\displaystyle{\int_{0}^{M}\left\langle	\frac{\partial \alpha (v)}{\partial s},
u\right\rangle-\int_{Q}\mid \nabla v\mid^{m-2} \nabla  v \nabla u
=\kappa\int_{Q} \frac{f(v)}{(\int_{\Omega}f(v)dx)^{2}}u},
\end{equation}
for all $\displaystyle{u\in \left( L^{m}\left(0,M;W^{1,m}(\Omega)\right) 
\cap L^\infty(Q)\right)}$. Furthermore, if we have 
$$
\displaystyle{u\in  \left(W^{1,1}\left( 0,M;L^1(\Omega)\right) 
\cap L^{m}\left(0,M;W^{1,m}(\Omega)\right)  \right)}
$$
with $\displaystyle{u(\cdot,M)=0}$, then 
$$
\displaystyle{\int_{0}^{M}\left\langle
\frac{\partial \alpha (v)}{\partial s},
u\right\rangle=-\int_{0}^{M}\int_{\Omega}
\left[\alpha(v)-\alpha(v_0)\right]\partial_s u},
$$
where the duality product is defined by 
$\left\langle\cdot, \cdot\right\rangle=\left\langle
\cdot, \cdot\right\rangle_{W^{-1,m'}(\Omega),W^{1,m}(\Omega)}$.
\end{definition}

\begin{remark}
Since $\displaystyle{\alpha_r}$ is an increasing function and  
$\displaystyle{\mid \alpha_r \mid\leq \mid \alpha\mid}$, then, 
by using Lemma~\ref{A3}, we also have that $(\alpha_r(v_r ))_r$ 
is bounded.
\end{remark}

Our plan is to derive now enough \emph{a priori} 
estimates needed in the sequel.

\begin{lemma}
For all $r>0$, we have
\begin{equation}
\label{A7}
\displaystyle{	\left|\left|	v_r\right|\right|_{L^{m}
\left(0,M;W^{1,m}(\Omega)\right)}\leq C_6,}
\end{equation}   
where $C_6$ is a positive constant independent of $r$.
\end{lemma}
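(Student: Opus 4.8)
The plan is to obtain the estimate by testing the regularized equation \eqref{P_r} with the natural test function $v_r$ itself and integrating in time over $[0,M]$. First I would write
$$
\int_0^M \left\langle \frac{\partial \alpha_r(v_r)}{\partial s}, v_r \right\rangle \, ds
- \int_Q \Delta_m^r v_r \, v_r
= \kappa \int_Q \frac{f_r(v_r)}{\left(\int_\Omega f_r(v_r)\,dx\right)^2} \, v_r,
$$
and treat the three terms in turn. For the parabolic term, the lemma of Alt--Luckhaus recalled in Section~\ref{section2} (applied to $\alpha_r$, which satisfies the same structural hypotheses) gives
$$
\int_0^M \left\langle \frac{\partial \alpha_r(v_r)}{\partial s}, v_r \right\rangle \, ds
= \int_\Omega \varPsi_r^*(\alpha_r(v_r))(M) - \int_\Omega \varPsi_r^*(\alpha_r(v_{r,0})),
$$
and by Remark~\ref{yy} together with Lemma~\ref{A3} and \eqref{w}, both boundary-in-time terms are bounded by a constant independent of $r$ (the first is $\geq 0$ and can simply be dropped; the second is controlled by $\|v_0\|_{L^\infty(\Omega)}+1$ via the uniform Lipschitz bound on $\alpha_r$). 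For the diffusion term, integration by parts and the Dirichlet condition give
$$
- \int_Q \Delta_m^r v_r \, v_r = \int_Q \left(|\nabla v_r|^2 + r\right)^{\frac{m-2}{2}} |\nabla v_r|^2 \geq \int_Q |\nabla v_r|^m,
$$
since $(|\nabla v_r|^2+r)^{(m-2)/2} \geq |\nabla v_r|^{m-2}$ for $m \geq 2$; this is the term that will produce $\|v_r\|_{L^m(0,M;W^{1,m}(\Omega))}^m$ after invoking Poincaré's inequality.

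For the right-hand side, I would use (H3) exactly as in the proof of Lemma~\ref{A3}: the lower bound $f_r \geq \sigma$ gives $\left(\int_\Omega f_r(v_r)\,dx\right)^2 \geq (\sigma\,\mathrm{meas}(\Omega))^2$, so the nonlocal prefactor is bounded below away from zero, while the compact-support property of $f$ together with Lemma~\ref{A3} shows $f_r(v_r)$ is bounded in $L^\infty(Q)$ uniformly in $r$. Combined with the $L^\infty$ bound on $v_r$ from Lemma~\ref{A3}, the whole right-hand side is bounded by $C\,|Q|$, a constant independent of $r$. Putting the three estimates together yields
$$
\int_Q |\nabla v_r|^m \leq C_6',
$$
and then Poincaré's inequality on $W_0^{1,m}(\Omega)$ upgrades this to the claimed bound on $\|v_r\|_{L^m(0,M;W^{1,m}(\Omega))}$.

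The main technical point to be careful about is the parabolic term: strictly speaking $v_r$ is (by Theorem~\ref{qsdlm}) a classical solution, so one may integrate $\int_\Omega \partial_s \alpha_r(v_r)\, v_r$ directly and recognize it as $\frac{d}{ds}\int_\Omega \varPsi_r^*(\alpha_r(v_r))$ by the chain rule and the identity \eqref{B2}, avoiding any subtlety about the duality pairing; the Alt--Luckhaus lemma is only needed to justify the same computation at the level of the weak formulation in the limit. The only other thing to check is that the constants coming from $\varPsi_r^*(\alpha_r(v_{r,0}))$ and from the $f_r(v_r)$ term are genuinely $r$-independent, which follows because $|\alpha_r| \leq |\alpha|$, because $\alpha_r \to \alpha$ locally uniformly (so $\alpha_r$ is uniformly Lipschitz on the fixed compact range of the $v_r$), and because $f_r \to f$ with all $f_r$ sharing the support and lower-bound constants in (H3). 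I expect no real obstacle beyond this bookkeeping.
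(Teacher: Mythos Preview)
Your proposal is correct and follows essentially the same route as the paper: test \eqref{P_r} with $v_r$, rewrite the parabolic term via the Legendre transform identity \eqref{B2}, integrate the diffusion term by parts using the Dirichlet condition, and control the right-hand side through $(H3)$ together with the $L^\infty$ bound of Lemma~\ref{A3}. Your treatment is in fact slightly more careful than the paper's about the $r$-independence of the constants and about why the computation with $\varPsi_r^*$ is justified for the classical regularized solution.
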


\begin{proof}
Multiplying the first equation of ${(\ref{P_r})}$ 
by $v_r$ and integrating, we get 
\begin{equation}
\label{A4}
\displaystyle{
\int_\Omega \frac{\partial \alpha_r(v_r)}{\partial s}v_r
-\int_\Omega\Delta_m^r v_r  v_r=\int_\Omega\kappa 
\frac{f_r(v_r)}{(\int_{\Omega}f_r(v_r)dx)^{2}}  v_r.}
\end{equation}
Applying (\ref{B2}), we obtain that
$$
\displaystyle{ \int_\Omega 
\frac{\partial \alpha_r(v_r)}{\partial s}v_r
=\int_\Omega \frac{\partial\left[\varPsi^*\left( 
\alpha_r(v_r)\right)\right]}{\partial s}.} 
$$
On another hand, by using  Green's formula, we get 
$$
\displaystyle{\int_\Omega\Delta_m^r v_r v_r
=-\int_\Omega \left(\mid\nabla v_r\mid^{2}
+r\right)^{\dfrac{m-2}{2}}
\nabla v_r\nabla v_r+\int_{\partial\Omega}\left(
\mid\nabla v_r\mid^{2}+r\right)
\frac{\partial  v_r}{\partial \nu}\cdot v_r.}
$$
Substituting into (\ref{A4}), we get 
$$
\begin{aligned}\displaystyle\int_\Omega 
\frac{\partial \alpha_r(v_r)}{\partial s}v_r
+\int_\Omega \left(\mid\nabla v_r\mid^{2}
+r\right)^{\dfrac{m-2}{2}}\mid\nabla v_r\mid^2
&=\int_\Omega \frac{\kappa 
\cdot f_r(v_r)}{(\int_{\Omega}f_r(v_r)dx)^{2}}v_r\\
&-\int_{\partial\Omega}\left(\mid\nabla v_r\mid^{m-2}+r\right)
\frac{\partial  v_r}{\partial \nu}\cdot v_r,
\end{aligned}
$$
$$
\begin{aligned}
\displaystyle
\int_\Omega \left(\mid\nabla v_r\mid^{2}
+r\right)^{\dfrac{m-2}{2}}\mid\nabla v_r\mid^2
&=\int_\Omega \frac{\kappa 
\cdot f_r(v_r)}{(\int_{\Omega}f_r(v_r)dx)^{2}}  
v_r-\int_\Omega \frac{\partial\left[
\varPsi^*\left( \alpha_r(v_r)\right)\right]}{\partial s}\\
&-\int_{\partial\Omega}\left(\mid\nabla v_r\mid^{m-2}
+r\right)\frac{\partial  v_r}{\partial \nu}\cdot v_r.	
\end{aligned}
$$
Then, using the boundary conditions, we have
\begin{equation}
\begin{aligned}
\displaystyle
\int_{0}^{M}\int_\Omega \left(\mid\nabla v_r\mid^{2}
+r\right)^{\dfrac{m-2}{2}}\mid\nabla v_r\mid^2
&=\int_{0}^{M}\int_\Omega \frac{\kappa 
\cdot f_r(v_r)}{(\int_{\Omega}f_r(v_r)dx)^{2}}  v_r\\
&-\int_{0}^{M}\int_\Omega \frac{\partial\left[
\varPsi^*\left( \alpha_r(v_r)\right)\right]}{\partial s}.
\end{aligned}
\end{equation}
From Remark~\ref{yy}, we know that 
$\left(\varPsi^*\left( \alpha_r(v_r) \right)\right)_r$ 
is bounded. With the aid of hypothesis $(H3)$ and Lemma~\ref{A3}, 
there exists a positive constant $C_7$ such that 
$$
\begin{aligned}
&\displaystyle\int_{0}^{M}\int_\Omega\kappa 
\frac{f_r(v_r)}{(\int_{\Omega}f_r(v_r)dx)^{2}}  v_r-\int_{0}^{M}
\int_\Omega \frac{\partial\left[\varPsi^*\left( 
\alpha_r(v_r)\right)\right]}{\partial s}\\
&\leq \int_{0}^{M}\int_\Omega\kappa \frac{f_r(v_r)}{(\int_{\Omega}
f_r(v_r)dx)^{2}}  v_r\\
&\quad -\int_{\Omega}\varPsi^*\left( \alpha_r(v_r(\cdot,M))\right) 
+  \int_{\Omega}\varPsi^*\left( \alpha_r(v_r(\cdot,0))\right)\\
&\leq \dfrac{\kappa}{\left(\sigma\cdot meas(\Omega)\right)^2}
\int_{0}^{M}\int_\Omega f_r(v_r)\cdot\mid v_r\mid\\
&\quad +2\cdot\max\left\{\left| \int_{\Omega}\varPsi^*\left( 
\alpha_r(v_r(\cdot,M))\right) \right|,\left|  
\int_{\Omega}\varPsi^*\left( \alpha_r(v_r(\cdot,0))\right) 
\right|\right\} 
\leq C_7.
\end{aligned}
$$
It yields that
$$ 
\displaystyle{\int_{0}^{M}\int_\Omega \left|\nabla v_r\right|^m 
\leq\int_{0}^{M}\int_\Omega \left(\mid\nabla v_r\mid^{2}
+r\right)^{\dfrac{m-2}{2}}\mid\nabla v_r\mid^2\leq C_7.}
$$
We deduce that $v_r\in L^{m}\left(0,M;W^{1,m}(\Omega)\right)$.
\end{proof}

\begin{remark}
\label{LS}
Inequality (\ref{A7}), combined with Young's inequality,  
imply that 
$$
\left(\left(\mid\nabla v_r\mid^{2}
+r\right)^{\dfrac{m-2}{2}}\nabla v_r\right)_r 
$$ 
is bounded in $L^{m'}\left(0,M;W^{1,m'}(\Omega)\right)$.
\end{remark}

A further upper bound for $v_r$ is established in the following lemma.

\begin{lemma}
\label{L52}
For all $r,s>0$, there exist positive constants $C(t)$,
$C(t,M)$, and $C_1(t,M)$, such that the following 
inequalities hold:
\begin{equation}
\label{L4}
\displaystyle{
\left|\left|v_r\left(s\right)\right|\right|_{W^{1,m}(\Omega)}\leq C(t),
\quad \mbox{ for all } s\geq t,}
\end{equation}
\begin{equation}
\displaystyle{
\int_{t}^{M}\int_\Omega \alpha_r'(v_r)\left(
\dfrac{\partial v_r }{\partial s}\right)^2\leq C(t,M),}
\end{equation}
\begin{equation}
\label{A8}
\displaystyle{
\int_{t}^{M}\int_\Omega \left(
\dfrac{\partial \alpha_r(v_r)}{\partial s} \right)^2\leq C_1(t,M)}.
\end{equation}
\end{lemma}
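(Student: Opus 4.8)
All three estimates will follow from a single energy identity, obtained by testing the first equation of \eqref{P_r} with $\partial_s v_r$. Set $F_r(\xi):=\tfrac1m(|\xi|^2+r)^{m/2}$, so that $\nabla_\xi F_r(\xi)=(|\xi|^2+r)^{(m-2)/2}\xi$, and put $y(s):=\int_\Omega F_r(\nabla v_r(s))$. Since $v_r=0$ on $\Gamma\times]0,M[$ forces $\partial_s v_r=0$ on the boundary, Green's formula gives $-\int_\Omega\Delta_m^r v_r\,\partial_s v_r=y'(s)$, and therefore
\begin{equation}
\label{plan1}
\int_\Omega\alpha_r'(v_r)\Big(\frac{\partial v_r}{\partial s}\Big)^2+y'(s)
=\kappa\int_\Omega\frac{f_r(v_r)}{\big(\int_\Omega f_r(v_r)\,dx\big)^2}\,\frac{\partial v_r}{\partial s}.
\end{equation}
By $(H3)$ and Lemma~\ref{A3} the nonlocal coefficient on the right-hand side of \eqref{plan1} is bounded, uniformly in $r$ and $s$, by a constant $C_g$ (as already established in the proof of Lemma~\ref{A3}); hence, by the Cauchy--Schwarz and Young inequalities together with $\alpha_r'\geq\lambda$, that right-hand side is at most $\tfrac12\int_\Omega\alpha_r'(v_r)(\partial_s v_r)^2+C_8$ with $C_8:=C_g^2\operatorname{meas}(\Omega)/(2\lambda)$. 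This yields the differential inequality
\begin{equation}
\label{plan2}
\tfrac12\int_\Omega\alpha_r'(v_r)\Big(\frac{\partial v_r}{\partial s}\Big)^2+y'(s)\leq C_8,\qquad 0<s<M.
\end{equation}

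To capture the regularizing (smoothing) effect I would multiply \eqref{plan2} by $s$ and use $s\,y'(s)=\tfrac{d}{ds}\big(s\,y(s)\big)-y(s)$; integrating over $(0,s)$ and dropping the nonnegative first term gives
\begin{equation}
\label{plan3}
s\,y(s)\leq\frac{C_8M^2}{2}+\int_0^M y(\tau)\,d\tau.
\end{equation}
The key point is that $\int_0^M y(\tau)\,d\tau=\tfrac1m\int_0^M\!\!\int_\Omega(|\nabla v_r|^2+r)^{m/2}$ is bounded \emph{independently of $r$}: assuming, as one may, that $0<r\leq1$ and writing $(|\nabla v_r|^2+r)^{m/2}=(|\nabla v_r|^2+r)^{(m-2)/2}|\nabla v_r|^2+r(|\nabla v_r|^2+r)^{(m-2)/2}$, Young's inequality bounds the whole expression by $|\nabla v_r|^m$ plus $r$-independent constants, so \eqref{A7} applies. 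From \eqref{plan3} we then obtain $y(s)\leq C'/s$ for all $s>0$, whence $\|\nabla v_r(s)\|_{L^m(\Omega)}^m\leq mC'/s$; combining with the Poincaré inequality (and Lemma~\ref{A3}) gives \eqref{L4}, with a constant $C(t)$ that blows up like $t^{-1/m}$ as $t\to0^+$.

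The remaining two bounds come from \eqref{plan2} once \eqref{L4} is available. Integrating \eqref{plan2} in $s$ over $[t,M]$ and using $0\leq y(M)$ and $y(t)\leq C'/t$ gives $\int_t^M\!\!\int_\Omega\alpha_r'(v_r)(\partial_s v_r)^2\leq 2\big(C_8M+C'/t\big)=:C(t,M)$, which is the second inequality. For the third, since $\partial_s\alpha_r(v_r)=\alpha_r'(v_r)\,\partial_s v_r$ and the construction \eqref{ke1} of $\alpha_r$ also provides a uniform upper bound $\alpha_r'\leq L$ (inherited from the Lipschitz continuity of $\alpha$), one has $(\partial_s\alpha_r(v_r))^2\leq L\,\alpha_r'(v_r)(\partial_s v_r)^2$, and \eqref{A8} follows with $C_1(t,M):=L\,C(t,M)$.

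The main obstacle is the second step: producing an $r$-uniform bound on $\|v_r(s)\|_{W^{1,m}(\Omega)}$ that is only finite for $s\geq t>0$. A direct integration of \eqref{plan2} controls $y(s)$ solely in terms of $y(0)=\tfrac1m\int_\Omega(|\nabla v_{r,0}|^2+r)^{m/2}$, which need not be bounded uniformly in $r$ since $v_0$ is merely in $L^\infty(\Omega)$; the multiplication by $s$ removes this dependence at the cost of having to estimate $\int_0^M y$, which is exactly where the seemingly harmless $r$-perturbation of the $m$-Laplacian must be handled via Young's inequality and the restriction $r\leq1$. A secondary point to verify is that \eqref{plan1} is legitimate: it is, because Theorem~\ref{qsdlm} furnishes a classical solution, so $\partial_s v_r$ is an admissible test function and $y\in C^1(0,M)$.
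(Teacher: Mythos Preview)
Your proof is correct and shares the paper's backbone: both test (\ref{P_r}) with $\partial_s v_r$, both work with the energy $y(s)=\tfrac1m\int_\Omega(|\nabla v_r|^2+r)^{m/2}$, and both derive (\ref{A8}) from the second estimate via the uniform Lipschitz bound $\alpha_r'\leq L$. The two arguments diverge in the intermediate steps. For the forcing term the paper rewrites $\int_\Omega g_r(v_r)\,\partial_s v_r$ as the time derivative of $\int_\Omega G_r(v_r)$ (with $G_r$ a primitive of $g_r$) and bounds it directly by a constant, whereas you absorb it into the dissipative term via Young's inequality and $\alpha_r'\geq\lambda$; your route is cleaner and sidesteps the paper's somewhat loose handling of the nonlocal denominator inside $G_r$. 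For the smoothing estimate (\ref{L4}) the paper simply integrates $y'\leq C_9$ through Lemma~\ref{Lk} and invokes Poincar\'e, leaving the origin of the $t$-dependence implicit (it must tacitly come from a mean-value selection on $[0,t]$ using (\ref{A7}), since $y(0)$ is not controlled uniformly in $r$); your multiplication by $s$, together with the bound on $\int_0^M y$ drawn from (\ref{A7}), makes the smoothing mechanism explicit and delivers the expected rate $C(t)\sim t^{-1/m}$, at the minor cost of verifying that the $r$-perturbation in $y$ is harmless --- which you do correctly under $r\leq 1$.
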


\begin{proof}
Multiplying the first equation of problem ${(\ref{P_r})}$ 
by $\displaystyle{\dfrac{\partial v_r}{\partial s}}$, 
and integrating, we obtain that
\begin{equation}
\label{A5}
\displaystyle{
\int_\Omega \frac{\partial \alpha_r(v_r)}{\partial s}
\dfrac{\partial v_r}{\partial s}-\int_\Omega\Delta_m^r v_r  
\dfrac{\partial v_r}{\partial s}=\int_\Omega\kappa 
\frac{f_r(v_r)}{(\int_{\Omega}f_r(v_r)dx)^{2}}  
\dfrac{\partial v_r}{\partial s}.}
\end{equation}
Since
$$	
\displaystyle{\int_\Omega \frac{\partial \alpha_r(v_r)}{\partial s}
\dfrac{\partial v_r}{\partial s}
=\int_\Omega \alpha_r'(v_r)\left( 
\dfrac{\partial v_r}{\partial s}\right)^2},
$$
the equality (\ref{A5}) becomes
$$
\displaystyle{
\int_\Omega \alpha_r'(v_r)\left( 
\dfrac{\partial v_r}{\partial s}\right)^2
-\int_\Omega\Delta_m^r v_r  
\dfrac{\partial v_r}{\partial s}
=\int_\Omega\kappa \frac{f_r(v_r)}{(\int_{\Omega}f_r(v_r)dx)^{2}}  
\dfrac{\partial v_r}{\partial s}.}
$$
By applying Green's formula, we get 
\begin{equation}
\displaystyle{
\int_\Omega \alpha_r'(v_r)\left( \dfrac{\partial v_r}{\partial s}\right)^2
+\dfrac{1}{m}\dfrac{\partial }{\partial s}\int_\Omega \left(
\mid\nabla v_r\mid^{2}+r\right)^{\dfrac{m}{2}} 
=\int_\Omega\kappa \frac{f_r(v_r)}{(\int_{\Omega}f_r(v_r)dx)^{2}}  
\dfrac{\partial v_r}{\partial s}}.
\end{equation}
Let $\displaystyle{ G_r(v_r):=\int_{0}^{v_r} g_r(s)ds }$ 
and $\displaystyle{g_r(s):=\frac{f_r(s)}{(\int_{\Omega}f_r(s)dx)^{2}}}$.
By using the boundedness of $\displaystyle{v_r}$ and (\ref{ke1}), 
we have  $\displaystyle{\dfrac{\partial G_r(v_r)}{\partial s}\leq C_8}$.
Then, it yields that 
$$
\displaystyle{\int_{\Omega}g_r(v_r)
\dfrac{\partial v_r}{\partial s}
\leq C_8\cdot meas(\Omega).}
$$
With this in mind, we derive 
\begin{equation}
\label{A6}
\displaystyle{
\int_\Omega \alpha_r'(v_r)\left( 
\dfrac{\partial v_r}{\partial s}\right)^2+\dfrac{1}{m}
\dfrac{\partial }{\partial s}\int_\Omega \left(\mid\nabla 
v_r\mid^{2}+r\right)^{\dfrac{m}{2}} \leq C_9.}
\end{equation}
Then,
\begin{equation}
\label{kw}
\displaystyle{\dfrac{1}{m}\dfrac{\partial }{\partial s}
\int_\Omega \left(\mid\nabla v_r\mid^{2}+r\right)^{\dfrac{m}{2}} 
\leq C_9}
\end{equation}
and, by using Gronwall's Lemma~\ref{Lk}, we get 
\begin{equation}
\label{4dd4}
\displaystyle{\int_\Omega \left|\nabla v_r\right|^m 
\leq\dfrac{1}{m}\int_\Omega \left(\mid\nabla 
v_r\mid^{2}+r\right)^{\dfrac{m}{2}} \leq C_{10}}.
\end{equation}
According to Poincar\'{e}'s inequality, it follows that
$$
\displaystyle{\left|\left|v_r\left(s\right)\right|
\right|_{W^{1,m}(\Omega)}\leq C(t),
\quad \mbox{ for all }~~s\geq t}.
$$
This, combined with inequality (\ref{A6}), yields
\begin{multline}
\label{fg4}
\displaystyle\int_{t}^{M}\int_\Omega \alpha_r'(v_r)\left( 
\dfrac{\partial v_r}{\partial s}\right)^2 
+\dfrac{1}{m}\int_\Omega \left(\mid\nabla v_r(\cdot,M)\mid^{2}
+r\right)^{\dfrac{m}{2}}\\
\leq \dfrac{1}{m}
\int_\Omega \left(\mid\nabla 
v_r(\cdot,t)\mid^{2}+r\right)^{\dfrac{m}{2}}
+C_9\left(M-t \right).		
\end{multline}
Now, add (\ref{4dd4}) to (\ref{fg4}), to obtain
$$
\displaystyle{\int_{t}^{M}\int_\Omega \alpha_r'(v_r)
\left( \dfrac{\partial v_r}{\partial s}\right)^2 
+\dfrac{1}{m}\int_\Omega \left(\mid\nabla 
v_r(\cdot,M)\mid^{2}+r\right)^{\dfrac{m}{2}}\leq C(t,M).}
$$
As a consequence, we have
$$
\displaystyle{\int_{t}^{M}\int_\Omega \alpha_r'(v_r)
\left( \dfrac{\partial v_r}{\partial s}\right)^2\leq C(t,M).}
$$
Since $\alpha$ is a locally Lipschitzian function, 
then there exists a positive constant $L$ such that 
$\displaystyle{\alpha_r'\leq L}$.  Hence, we get 
$$	
\displaystyle{\int_{t}^{M}\int_\Omega 
\left(\dfrac{\partial \alpha_r(v_r)}{\partial s} \right)^2
\leq L \int_{t}^{M}\int_\Omega \alpha_r'(v_r)\left( 
\dfrac{\partial v_r}{\partial s}\right)^2\leq  C_1(t,M).}
$$
The proof is complete.
\end{proof}

\begin{theorem}
\label{qsdlm:ws}
Assume that hypotheses $(H1)$--$(H3)$ hold. Then there exists a 
weak bounded solution to problem (\ref{P_r}).
\end{theorem}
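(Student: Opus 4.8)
The plan is to obtain a bounded weak solution of problem~\eqref{(P)} by passing to the limit as $r\to 0$ in the classical solutions $v_r$ of the regularized problems~\eqref{P_r} produced by Theorem~\ref{qsdlm}. First I would collect the bounds of this section that are \emph{uniform in $r$}: $\|v_r\|_{L^\infty(Q)}\le C$ from Lemma~\ref{A3}, $\|v_r\|_{L^{m}(0,M;W^{1,m}(\Omega))}\le C_6$ from \eqref{A7}, $\|v_r(s)\|_{W^{1,m}(\Omega)}\le C(t)$ for $s\ge t$ from \eqref{L4}, and $\|\partial_s\alpha_r(v_r)\|_{L^2(t,M;L^2(\Omega))}\le C_1(t,M)$ from \eqref{A8}. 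Since $\alpha_r$ is Lipschitz with $\alpha_r(0)=0$, $\alpha_r(v_r)$ is bounded in $L^{m}(0,M;W^{1,m}(\Omega))$ as well; and rewriting the first equation of \eqref{P_r} as $\partial_s\alpha_r(v_r)=\Delta_m^r v_r+\kappa f_r(v_r)/(\int_\Omega f_r(v_r))^2$, Remark~\ref{LS} together with $(H3)$ (which keeps the denominator $\ge(\sigma\,\mathrm{meas}(\Omega))^2$) shows that $\partial_s\alpha_r(v_r)$ is bounded in $L^{m'}(0,M;W^{-1,m'}(\Omega))$.

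Next, up to a subsequence, I would pass to weak limits $v_r\rightharpoonup v$ in $L^{m}(0,M;W^{1,m}(\Omega))$ and weakly-$*$ in $L^\infty(Q)$, $(|\nabla v_r|^2+r)^{\frac{m-2}{2}}\nabla v_r\rightharpoonup\Phi$ in $(L^{m'}(Q))^N$, and $\partial_s\alpha_r(v_r)\rightharpoonup\chi$ in $L^{m'}(0,M;W^{-1,m'}(\Omega))$. The Aubin--Lions--Simon compactness lemma, applied to $\alpha_r(v_r)$ bounded in $L^{m}(0,M;W^{1,m}(\Omega))$ with time derivative bounded in $L^{m'}(0,M;W^{-1,m'}(\Omega))$, yields $\alpha_r(v_r)\to\xi$ strongly in $L^{m}(Q)$ and a.e.\ in $Q$ along a further subsequence. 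Because $\alpha_r\to\alpha$ in $\mathcal{C}_{loc}(\mathbb{R})$ and the $v_r$ are uniformly bounded, $\alpha(v_r)-\alpha_r(v_r)\to0$ uniformly, hence $\alpha(v_r)\to\xi$ a.e.; as $\alpha$ is a homeomorphism of $\mathbb{R}$ (strictly increasing, $\alpha'\ge\lambda>0$), it follows that $v_r\to v$ a.e.\ in $Q$, that $\xi=\alpha(v)$, and thus $\chi=\partial_s\alpha(v)$. With $v_r\to v$ a.e., $f_r\to f$ in $L^1(Q)$, and $f$ Lipschitz with compact support, dominated convergence identifies the limit of the source as $\kappa f(v)/(\int_\Omega f(v))^2$, the denominator remaining $\ge(\sigma\,\mathrm{meas}(\Omega))^2$ by $(H3)$. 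Passing to the limit in the weak formulation of \eqref{P_r} then shows that $v$ satisfies the weak identity of the Definition with $|\nabla v|^{m-2}\nabla v$ replaced by $\Phi$.

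The crux is to show $\Phi=|\nabla v|^{m-2}\nabla v$, which I would establish by a Minty--Browder monotonicity argument. Testing \eqref{P_r} with $v_r$ and integrating in time (as in the derivation of the bound \eqref{A7}) gives
$$\int_\Omega\varPsi_r^*(\alpha_r(v_r(\cdot,M)))-\int_\Omega\varPsi_r^*(\alpha_r(v_{r,0}))+\int_Q(|\nabla v_r|^2+r)^{\frac{m-2}{2}}|\nabla v_r|^2=\kappa\int_Q\frac{f_r(v_r)}{(\int_\Omega f_r(v_r))^2}\,v_r,$$
with $\varPsi_r(s)=\int_0^s\alpha_r$. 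Using that $v_r(\cdot,M)$ is bounded in $W^{1,m}(\Omega)$ by \eqref{L4}, hence compact in $L^2(\Omega)$, so $\alpha_r(v_r(\cdot,M))\to\alpha(v(\cdot,M))$ in $L^2(\Omega)$, together with $\alpha_r(v_{r,0})\to\alpha(v_0)$ (from \eqref{w} and \eqref{ke1}) and the boundedness of $\varPsi_r^*(\alpha_r(\cdot))$ (Remark~\ref{yy}), dominated convergence lets me pass to the limit in the two boundary terms, while the source term converges as above; comparing with the Alt--Luckhaus identity $\int_0^M\langle\partial_s\alpha(v),v\rangle=\int_\Omega\varPsi^*(\alpha(v(\cdot,M)))-\int_\Omega\varPsi^*(\alpha(v_0))$ and the weak identity already satisfied by $(v,\Phi)$ tested against $v$, I obtain $\limsup_{r\to0}\int_Q(|\nabla v_r|^2+r)^{\frac{m-2}{2}}|\nabla v_r|^2\le\int_Q\Phi\cdot\nabla v$. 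Then, for every $w\in L^{m}(0,M;W^{1,m}(\Omega))$, the elementary monotonicity inequality
$$\int_Q\Big[(|\nabla v_r|^2+r)^{\frac{m-2}{2}}\nabla v_r-(|\nabla w|^2+r)^{\frac{m-2}{2}}\nabla w\Big]\cdot(\nabla v_r-\nabla w)\ge0$$
passes to the limit (using $(|\nabla w|^2+r)^{\frac{m-2}{2}}\nabla w\to|\nabla w|^{m-2}\nabla w$ strongly) to give $\int_Q(\Phi-|\nabla w|^{m-2}\nabla w)\cdot(\nabla v-\nabla w)\ge0$; the choice $w=v\mp\theta\varphi$ with $\theta\to0^+$ forces $\Phi=|\nabla v|^{m-2}\nabla v$. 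I expect the delicate points to be the handling of the final-time and initial-time boundary terms of this energy identity (convergence of the traces and of $\alpha_r(v_{r,0})$ in a sufficiently strong topology, and the apparently circular use of the initial condition, which is resolved by first recovering $\alpha(v(\cdot,0))=\alpha(v_0)$ from the a.e.\ convergence), together with the interplay of the parameter $r$ sitting inside the operator.

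Finally, having identified $\Phi$, the function $v$ satisfies the weak identity of the Definition for all admissible test functions (by density from smooth ones, using the above convergences), lies in $L^\infty(Q)\cap L^{m}(0,M;W^{1,m}(\Omega))\cap L^\infty(t,M;W^{1,m}(\Omega))$ for every $t>0$ by the uniform bounds and the weak lower semicontinuity of norms, and the initial condition is recovered in the form $\int_0^M\langle\partial_s\alpha(v),u\rangle=-\int_Q[\alpha(v)-\alpha(v_0)]\,\partial_s u$ for $u$ with $u(\cdot,M)=0$, by passing to the limit in the corresponding identity for $v_r$. This produces a bounded weak solution and completes the plan.
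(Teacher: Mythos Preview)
Your proposal is correct and follows essentially the same route as the paper: pass to the limit $r\to 0$ in the regularized problems using the uniform bounds of Lemmas~\ref{A3} and \ref{L52} together with Remark~\ref{LS}, extract weak limits, obtain strong compactness of $\alpha_r(v_r)$ via an Aubin--Lions/Simon argument, and identify the limit of $\Delta_m^r v_r$ with $\Delta_m v$ by monotonicity. The only notable differences are cosmetic: where the paper merely cites \cite{blanchard1988study} for the identification $\vartheta=\Delta_m v$, you spell out the Minty--Browder energy argument in full; and for continuity at $s=0$ the paper invokes the $L^1$-contraction estimate of Lemma~\ref{L53} (comparing $v$ with $v_r$), whereas you recover the initial datum directly by passing to the limit in the time-integrated weak formulation. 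Both variants are standard, and your more self-contained treatment of the flux identification is arguably cleaner.
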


\begin{proof}
To achieve the proof of Theorem~\ref{qsdlm:ws}, 
we need to pass to the limit in problem $\eqref{P_r}$.
By virtue of Lemma  ${\ref{A3}}$, there exists 
a subsequence, still denoted  
$\displaystyle{\left(v_r\right)_{r}},$  such that 
$$
\displaystyle{v_r\longrightarrow v~~\mbox{weakly star in}~~L^\infty(Q)}.
$$
Note from estimate ${(\ref{A7})}$ that 
$$ 
\displaystyle{v_r\longrightarrow 
v~~\mbox{weakly in~}~L^{m}\left(0,M;W^{1,m}(\Omega)\right)}.
$$
Since $\left(v_r\right)_{r}$ is bounded in 
$\displaystyle{L^{\infty}\left(t,M;W^{1,m}(\Omega)\right)}$, 
then 
$$ 
\displaystyle{v_r\longrightarrow v~~\mbox{weakly 
star in }~L^{\infty}\left(t,M;W^{1,m}_0(\Omega)\right)}.
$$
Under the hypotheses of $f_r$, we have $f_r\longrightarrow f$ a.e. 
This, together with Vitali's theorem (see \cite{reynolds2004vitali}), 
implies the convergence to $f(v)$ in $L^1(Q)$.
Applying Green's formula,
$$
\displaystyle{\left|\int_{0}^{M}\int_\Omega\Delta_m^r v_r u\right|}
\le\displaystyle{\left|\int_\Omega \left(\mid\nabla v_r\mid^{2}
+r\right)^{\dfrac{m-2}{2}}\nabla v_r\nabla u\right| },~\mbox{ for } 
\displaystyle{u\in L^{m}\left(0,M;W^{1,m}_0(\Omega)\right)}.
$$
By using Remark~\ref{LS}, the right-hand side of this inequality is bounded. 
Then there exists 
$\displaystyle{\vartheta\in L^{m'}\left(0,M;W^{-1,m'}(\Omega)\right)}$ 
such that 
$$
\displaystyle{\Delta_m^r v_r\longrightarrow \vartheta}~~\mbox{weakly in }
\displaystyle{ ~~L^{m'}\left(0,M;W^{-1,m'}(\Omega)\right)}.
$$
A classical argument (see \cite{blanchard1988study}), 
asserts that $\displaystyle{\vartheta =\Delta_m v}$.

Combining (\ref{L4}) and the smoothness of function $\alpha_r$, 
yields the boundedness of the sequence 
$\displaystyle{\left(\alpha_r(v_r)\right)_{r}}$ 
in $\displaystyle{ L^{\infty}\left(t,M;W^{1,m}(\Omega)\right)}$. 
On the other hand, by using ${(\ref{A8})}$, we deduce that 
$\displaystyle{\left(\dfrac{\partial \alpha_r(v_r)}{\partial s}\right)_{r}}$ 
is bounded in $L^{2}\left(t,M;L^{2} (\Omega)\right)$, for all $t>0$. 
Aubin's lemma (see \cite{Jacque86}) allows us to claim that  
$\displaystyle{\left(\alpha_r(v_r)\right)_{r}}$ is relatively compact 
in $\displaystyle{C\left(]0,M [;L^1(\Omega) \right)}$. Therefore, 
$\displaystyle{\alpha_r(v_r)\longrightarrow \delta}$ strongly in 
$\displaystyle{C\left(]0,M [;L^1(\Omega) \right)}$. Hence, in an entirely similar manner as in 
\cite[p.~1048]{blanchard1988study}, it can be handled that $\displaystyle{\delta=\alpha(v)}$.
For the continuous of the solution at point $s=0$, we proceed as in \cite{andreu1998attractor}.  
From Lemma~\ref{L52}, we deduce that $\displaystyle{\alpha_{r}(v_{r} )\longrightarrow \alpha(v)}$ 
strongly in $\displaystyle{C\left([0,M];L^1(\Omega) \right)}$. 

Let us consider $\displaystyle{v_0\in L^{\infty}(\Omega)}$ and take a smooth sequence 
$\left(v_{r,0}\right)$ satisfying (\ref{w}). Hence, $\left(v_{r,0}\right)$  
is bounded and convergent to $v_{0}$ in $L^1(\Omega)$.
Then, thanks to the dominate convergence theorem, we have 
$\displaystyle{\alpha\left( v_{r,0}\right)\longrightarrow \alpha\left( v_{0}\right)}$ 
in  $\displaystyle{L^{1}(\Omega)}$. Now, we deal with initial data  
$\displaystyle{v_0\in C^1(\bar{\Omega})}$. Choosing the sequence 
$\displaystyle{\left( v_{r,0}\right)}$ bounded in the space 
$\displaystyle{W^{1,m}(\Omega)}$ and verifying  hypothesis ${(\ref{w})}$, 
the corresponding  $\displaystyle{\alpha(v_{r} )}$ are continuous at $s=0$. 
Furthermore, we have
\begin{equation}
\begin{split}
\|\alpha(v(s))-\alpha(v(0))\|_{L^{1}(\Omega)}
&\leq \left\|\alpha(v(s))-\alpha\left(v_{r}(s)\right)\right\|_{L^{1}(\Omega)}
+\displaystyle{\left\|\alpha\left(v_{r}(s)\right)
-\alpha\left(v_{r,0 }\right)\right\|_{L^{1}(\Omega)}}\\
&\quad +\displaystyle{\left\|\alpha\left(v_{r,0 }\right)
-\alpha\left(v_{0}\right)\right\|_{L^{1}(\Omega)}}.
\end{split}
\end{equation}
In view of Lemma~\ref{L53}, we have
\begin{equation}
\label{key19}
\begin{split}
\|\alpha&(v(s))-\alpha(v(0))\|_{L^{1}(\Omega)}
\displaystyle{\leq e^{K s}\left\|\alpha\left( v_{0}\right)
-\alpha\left(v_{r,0}\right)\right\|_{L^{1}(\Omega)}}\\
&\quad +\displaystyle{\left\|\alpha\left(v_{r}(s)\right)
- \alpha \left(v_{r,0 }\right)\right\|_{L^{1}(\Omega)}}
+\displaystyle{\left\|\alpha\left(v_{r,0 }\right)
-\alpha\left(v_{0}\right)\right\|_{L^{1}(\Omega)}}.
\end{split}
\end{equation}
As $s$ goes to $0$ of (\ref{key19}), all terms  
of the right hand side of $ {(\ref{key19})}$ tend to $0$. 
Then, we deduce that 
$\displaystyle{\alpha\left(v\right)\in C\left([0,M];L^1(\Omega) \right)}$.
Finally, letting $\displaystyle{r\longrightarrow 0}$ in $ {(\ref{P_r})}$, 
we obtain the existence of a weak bounded solution.
\end{proof}


\section{Uniqueness of solution}
\label{section55}

To prove the uniqueness of the solution, we need to impose
some further hypothesis. We assume that there exists 
a positive constant $L_2$ such that
\begin{equation}
\label{B1}
\displaystyle{	\mid f(u)-f(v)\mid
\leq L_2\mid\alpha(u)-\alpha(v) \mid.}
\end{equation}

\begin{lemma}
\label{L53}
Let $v$ and $u$ be two solutions of problem (\ref{(P)}) 
with initial data $v_{0}$ and $u_{0}$, respectively. 
Then, the following inequality holds:
\begin{equation}
\displaystyle{
\|\alpha(v(s))-\alpha(u(s))\|_{L^{1}(\Omega)} 
\leq e^{K s}\left\|\alpha\left( v_{0}\right)
-\alpha\left(u_{0}\right)\right\|_{L^{1}(\Omega)}},
\end{equation}
where $K$ is a positive constant.
\end{lemma}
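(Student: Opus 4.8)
The plan is to compare the two weak solutions by testing the difference of their equations against a regularized sign of $\alpha(v)-\alpha(u)$, in the spirit of the $L^1$-contraction arguments of Blanchard--Francfort and Alt--Luckhaus. First I would subtract the weak formulations satisfied by $v$ and $u$, obtaining for every admissible test function $w$
\[
\int_0^M\left\langle \frac{\partial}{\partial s}\bigl(\alpha(v)-\alpha(u)\bigr),w\right\rangle
-\int_Q\bigl(|\nabla v|^{m-2}\nabla v-|\nabla u|^{m-2}\nabla u\bigr)\cdot\nabla w
=\kappa\int_Q\!\left(\frac{f(v)}{(\int_\Omega f(v))^2}-\frac{f(u)}{(\int_\Omega f(u))^2}\right)w .
\]
Then I would choose $w=p_\varepsilon(\alpha(v)-\alpha(u))$, where $p_\varepsilon$ is a smooth, nondecreasing, bounded approximation of the sign function (e.g. $p_\varepsilon(\sigma)=\min(1,\max(-1,\sigma/\varepsilon))$ suitably mollified), so that $w$ is admissible because $\alpha(v)-\alpha(u)\in L^m(0,M;W^{1,m}(\Omega))\cap L^\infty(Q)$.

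The three terms are then handled separately. For the time term, the chain rule (the Alt--Luckhaus lemma quoted in Section~\ref{section2}, applied to the primitive of $p_\varepsilon$) turns $\int_0^M\langle \partial_s(\alpha(v)-\alpha(u)),p_\varepsilon(\alpha(v)-\alpha(u))\rangle$ into $\int_\Omega P_\varepsilon(\alpha(v)-\alpha(u))(s)-\int_\Omega P_\varepsilon(\alpha(v)-\alpha(u))(0)$ with $P_\varepsilon'=p_\varepsilon$; as $\varepsilon\to0$ this converges to $\|\alpha(v(s))-\alpha(u(s))\|_{L^1}-\|\alpha(v_0)-\alpha(u_0)\|_{L^1}$ after differentiating in $s$ (or one works directly on $[0,s]$). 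For the diffusion term, monotonicity of the $m$-Laplacian gives $\bigl(|\nabla v|^{m-2}\nabla v-|\nabla u|^{m-2}\nabla u\bigr)\cdot\nabla\bigl(p_\varepsilon(\alpha(v)-\alpha(u))\bigr)=p_\varepsilon'(\alpha(v)-\alpha(u))\,\alpha'(\,\cdot\,)\,\bigl(|\nabla v|^{m-2}\nabla v-|\nabla u|^{m-2}\nabla u\bigr)\cdot(\nabla v-\nabla u)\ge0$ using $p_\varepsilon'\ge0$ and $\alpha'\ge\lambda>0$ (here one must be a little careful that $\nabla\alpha(v)=\alpha'(v)\nabla v$, which is why the chain-rule form with $\alpha'$ appears); hence this term has the good sign and may be dropped. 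For the right-hand side, the nonlocal nonlinearity is estimated using $|w|\le1$, hypothesis (H3) (so $\int_\Omega f(v),\int_\Omega f(u)\ge\sigma\,\mathrm{meas}(\Omega)$), the boundedness of $f$ on the range of the solutions, and the extra Lipschitz assumption \eqref{B1}: writing
\[
\frac{f(v)}{(\int_\Omega f(v))^2}-\frac{f(u)}{(\int_\Omega f(u))^2}
=\frac{f(v)-f(u)}{(\int_\Omega f(v))^2}
+f(u)\,\frac{(\int_\Omega f(u))^2-(\int_\Omega f(v))^2}{(\int_\Omega f(v))^2(\int_\Omega f(u))^2},
\]
each piece is bounded by a constant times $\|\alpha(v)-\alpha(u)\|_{L^1(\Omega)}$ (for the second piece, $|\int_\Omega f(u)-\int_\Omega f(v)|\le L_2\|\alpha(u)-\alpha(v)\|_{L^1}$ by \eqref{B1}), giving a bound $K\|\alpha(v(s))-\alpha(u(s))\|_{L^1(\Omega)}$.

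Collecting the three estimates and letting $\varepsilon\to0$ yields, in the distributional sense on $(0,M)$,
\[
\frac{d}{ds}\|\alpha(v(s))-\alpha(u(s))\|_{L^1(\Omega)}\le K\,\|\alpha(v(s))-\alpha(u(s))\|_{L^1(\Omega)} ,
\]
and Gronwall's Lemma~\ref{Lk} (with $g\equiv0$) gives the claimed inequality $\|\alpha(v(s))-\alpha(u(s))\|_{L^1}\le e^{Ks}\|\alpha(v_0)-\alpha(u_0)\|_{L^1}$; continuity of $\alpha(v),\alpha(u)$ at $s=0$ in $L^1(\Omega)$, established in Section~\ref{section4}, makes the initial value meaningful. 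The main obstacle I anticipate is the rigorous justification of the chain rule for the time term when testing with $p_\varepsilon(\alpha(v)-\alpha(u))$ — one needs $\partial_s(\alpha(v)-\alpha(u))\in L^{m'}(0,M;W^{-1,m'}(\Omega))$ and the test function in $L^m(0,M;W^{1,m}(\Omega))$ to apply the Alt--Luckhaus integration-by-parts formula, and then pass to the limit $\varepsilon\to0$; a secondary technical point is ensuring that differences of the form $\nabla\alpha(v)-\nabla\alpha(u)$ are correctly related to $\nabla v-\nabla u$ via $\alpha'$ so that the monotonicity of $\Delta_m$ can actually be invoked with the correct sign.
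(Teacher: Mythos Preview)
The paper gives no argument here beyond ``similar to \cite{diaz1994nonlinear}'', so your sketch is more detailed than what the authors provide; the overall strategy (test with a regularized sign, drop the monotone diffusion term, estimate the nonlocal right-hand side via \eqref{B1} and (H3), apply Gronwall) is the intended one.

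There is, however, a real problem with your choice of test function. Testing with $p_\varepsilon(\alpha(v)-\alpha(u))$ gives
\[
\nabla\bigl(p_\varepsilon(\alpha(v)-\alpha(u))\bigr)
=p_\varepsilon'(\alpha(v)-\alpha(u))\,\bigl(\alpha'(v)\nabla v-\alpha'(u)\nabla u\bigr),
\]
and there is no single factor ``$\alpha'(\cdot)$'' you can pull out: $\alpha'(v)\nabla v-\alpha'(u)\nabla u$ is \emph{not} a nonnegative multiple of $\nabla v-\nabla u$, and in general
\[
\bigl(|\nabla v|^{m-2}\nabla v-|\nabla u|^{m-2}\nabla u\bigr)\cdot\bigl(\alpha'(v)\nabla v-\alpha'(u)\nabla u\bigr)
\]
can be negative (already for $m=2$: take $\nabla v=(1,0)$, $\nabla u=(2,0)$, $\alpha'(v)=3$, $\alpha'(u)=1$). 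So the diffusion term cannot be dropped with your test function, and the ``secondary technical point'' you flag is in fact fatal to the argument as written.

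The fix is to test with $p_\varepsilon(v-u)$ instead of $p_\varepsilon(\alpha(v)-\alpha(u))$; this is exactly what the authors do in their detailed uniqueness proof (Lemma~\ref{533}, with $H_\mu(v-u)$). Then $\nabla p_\varepsilon(v-u)=p_\varepsilon'(v-u)\,(\nabla v-\nabla u)$ and Tartar's inequality (Lemma~\ref{53}) gives the correct sign immediately. For the time term this costs nothing: since $\alpha$ is strictly increasing, $\mathrm{sign}(v-u)=\mathrm{sign}(\alpha(v)-\alpha(u))$, so after letting $\varepsilon\to0$ one still recovers $\frac{d}{ds}\|\alpha(v)-\alpha(u)\|_{L^1(\Omega)}$ on the left (this is the standard Kato-type inequality in the doubly nonlinear setting). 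Your treatment of the nonlocal right-hand side is correct and matches the decomposition the paper uses in \eqref{643197}--\eqref{1ey25}.
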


\begin{proof} 
The proof is similar to the one in \cite{diaz1994nonlinear}.
\end{proof}

For the proof of our next result, 
we need the following lemma.

\begin{lemma}[Tartar's inequality \cite{simon1981regularite}]
\label{53}
If $a,b\in\mathbb{R}^N$, then 
\begin{equation}
\label{ii}
\displaystyle{	\left[|a|^{m-2} a-|b|^{m-2} b\right] 
\cdot(a-b) \geq C(m)
\left\{
\begin{array}{ll}
|a-b|^{m}, & \text { if } m \geq 2, \\
\frac{|a-b|^{2}}{(|a|+|b|)^{2-m}}, & \text { if } 1<m<2,
\end{array}\right.}
\end{equation}
for all $m>1$, where $C(m)=2^{2-m}$ 
when $m \geq 2$ and $C(m)=m-1$ when $1<m<2$.
\end{lemma}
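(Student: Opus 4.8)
The plan is to reduce the vector inequality to a one-dimensional statement and then use convexity. First I would dispose of the trivial case $a=b$, where both sides vanish. Assuming $a\neq b$, I would introduce the auxiliary scalar function $\varphi(t)=|b+t(a-b)|^{m-2}(b+t(a-b))\cdot(a-b)$ for $t\in[0,1]$, so that the left-hand side of \eqref{ii} equals $\varphi(1)-\varphi(0)=\int_0^1\varphi'(t)\,dt$. A direct differentiation, writing $c(t)=b+t(a-b)$, gives
$$
\varphi'(t)=|c(t)|^{m-2}|a-b|^{2}+(m-2)|c(t)|^{m-4}\bigl(c(t)\cdot(a-b)\bigr)^{2},
$$
which by Cauchy--Schwarz is bounded below by $(m-1)|c(t)|^{m-2}|a-b|^{2}$ when $1<m<2$ (the coefficient $m-2$ is negative, so the cross term hurts us, but $|c(t)\cdot(a-b)|\le|c(t)||a-b|$ saves a factor $m-1$), and is bounded below by $|c(t)|^{m-2}|a-b|^{2}$ when $m\ge 2$ (the cross term now helps).

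For the case $m\ge 2$ I would then bound $|c(t)|$ below along the segment: the issue is that $|c(t)|$ may be small in the middle of the segment, so instead I would use that $|a-b|\le|a|+|b|$ is not what is wanted here; rather, I would exploit symmetry. Integrating $\varphi'(t)\ge |c(t)|^{m-2}|a-b|^2$ over $[0,1]$ and using that $\max(|c(0)|,|c(1)|)=\max(|a|,|b|)\ge\tfrac12|a-b|$, one gets on at least half of the interval (say the half nearer the larger endpoint) the bound $|c(t)|\ge\tfrac12|a-b|$ fails in general; the clean route is instead the standard estimate $\int_0^1|c(t)|^{m-2}\,dt\ge 2^{2-m}$ when $m\ge2$, which follows because $|c(t)|\ge|a-b|\cdot\operatorname{dist}(t,\text{projection point})$ type reasoning, or more simply because $t\mapsto|c(t)|^{m-2}$ is convex and one compares with its values at the endpoints — whichever gives the stated constant $C(m)=2^{2-m}$. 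For $1<m<2$ the function $|c(t)|^{m-2}$ is now not integrable-friendly near a zero of $c$, but since $m-2>-1$ the integral still converges, and I would use $|c(t)|\le|a|+|b|$ to get $\int_0^1(m-1)|c(t)|^{m-2}|a-b|^2\,dt\ge(m-1)|a-b|^2(|a|+|b|)^{m-2}$, which is exactly the claimed lower bound with $C(m)=m-1$.

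The main obstacle is the bookkeeping of the constant $C(m)=2^{2-m}$ in the case $m\ge 2$: getting the sharp (or at least the stated) constant requires care in estimating $\int_0^1|c(t)|^{m-2}\,dt$ from below, since $|c(t)|$ can degenerate in the interior of the segment. I expect to handle this by the substitution that places the foot of the perpendicular from the origin to the line through $a,b$ at a convenient parameter value, reducing the integral to $\int|{s}|^{m-2}\,ds$ over a symmetric-ish interval and then applying convexity of $|s|^{m-2}$ for $m\ge2$; alternatively one can cite that this is the classical computation in \cite{simon1981regularite}. Everything else — the case split, the differentiation identity, and the Cauchy--Schwarz step — is routine.
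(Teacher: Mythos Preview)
The paper does not prove this lemma at all: it is stated with a citation to \cite{simon1981regularite} and used as a black box, so there is no ``paper's own proof'' to compare against. Your sketch is therefore extra work the authors did not attempt.

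That said, your argument for $1<m<2$ is correct: the integral representation plus the Cauchy--Schwarz step gives $\varphi'(t)\ge (m-1)|c(t)|^{m-2}|a-b|^2$, and since $m-2<0$ and $|c(t)|\le |a|+|b|$ you immediately get the stated bound with constant $m-1$.

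Your $m\ge 2$ argument, however, has a genuine gap. After observing that the cross term $(m-2)|c(t)|^{m-4}(c(t)\cdot(a-b))^2$ is nonnegative you \emph{discard} it and try to prove
\[
\int_0^1 |c(t)|^{m-2}\,dt \;\ge\; 2^{2-m}\,|a-b|^{m-2}.
\]
This inequality is false. Take $a=-b$ with $|b|=1$: then $c(t)=(1-2t)b$, $|a-b|=2$, and
\[
\int_0^1 |1-2t|^{m-2}\,dt=\frac{1}{m-1},
\qquad 2^{2-m}|a-b|^{m-2}=1,
\]
so for any $m>2$ the left side is strictly smaller. The point is that $a=-b$ is exactly the equality case in Tartar's inequality, and there the cross term contributes a factor $m-1$ that you cannot afford to throw away. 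A correct route for $m\ge 2$ either keeps both terms in $\varphi'(t)$, or avoids the integral representation altogether and argues algebraically, e.g.\ via
\[
\bigl(|a|^{m-2}a-|b|^{m-2}b\bigr)\cdot(a-b)\;\ge\;\tfrac12\bigl(|a|^{m-2}+|b|^{m-2}\bigr)|a-b|^2
\]
(which follows from expanding and using $(|a|^{m-2}-|b|^{m-2})(|a|^2-|b|^2)\ge 0$) together with a suitable lower bound on $|a|^{m-2}+|b|^{m-2}$ in terms of $|a-b|^{m-2}$.
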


\begin{lemma}
\label{533}
Let us consider two solutions $\displaystyle{v}$ and $\displaystyle{u}$ 
of problem (\ref{(P)}) with initial data $\displaystyle{v_{0}}$ 
and $\displaystyle{u_{0}}$, respectively, such that 
$\displaystyle{v_{0}= u_{0}}$. Then, 
$\displaystyle{v= u}$ in $Q$.
\end{lemma}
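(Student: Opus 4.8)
The plan is to prove uniqueness by a standard energy argument applied to the difference of two solutions. Let $v$ and $u$ be two weak bounded solutions of \eqref{(P)} with the same initial datum $v_0 = u_0$. First I would subtract the weak formulations for $v$ and $u$ and test with an admissible test function that isolates the difference; the natural choice here is $u - v$ itself (or, to handle the time derivative of $\alpha$ cleanly, a truncation/regularization of $\alpha(v) - \alpha(u)$, as in \cite{alt1983quasilinear,diaz1994nonlinear}). This yields, after integrating on $(0,s)$,
$$
\int_0^s \left\langle \frac{\partial}{\partial \tau}\bigl(\alpha(v)-\alpha(u)\bigr), v-u \right\rangle
+ \int_{Q_s} \bigl(|\nabla v|^{m-2}\nabla v - |\nabla u|^{m-2}\nabla u\bigr)\cdot \nabla(v-u)
= \kappa \int_{Q_s} \left(\frac{f(v)}{(\int_\Omega f(v))^2} - \frac{f(u)}{(\int_\Omega f(u))^2}\right)(v-u).
$$

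For the first term, the monotonicity of $\alpha$ (hypothesis (H2)) together with the convexity identity \eqref{B2} gives a nonnegative contribution controlling $\int_\Omega \varPsi^*(\alpha(v)) + \varPsi^*(\alpha(u)) - \cdots$; more simply, since $\alpha$ is increasing one has the pointwise bound $(\alpha(v)-\alpha(u))(v-u)\ge 0$, and using $\alpha'\ge\lambda$ one can bound below by $\lambda\|v-u\|^2$ after an integration by parts in time, or alternatively keep the quantity $\|\alpha(v(s))-\alpha(u(s))\|$ as the Lyapunov functional, exactly as in Lemma~\ref{L53}. For the second term, Tartar's inequality (Lemma~\ref{53}) makes it nonnegative (and in fact coercive: $\ge C(m)\int_{Q_s}|\nabla(v-u)|^m$ when $m\ge 2$), so it may simply be discarded on the left. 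For the right-hand side, the nonlocal nonlinearity must be estimated: writing the difference of the two fractions over a common denominator and using hypothesis (H3) (so $\int_\Omega f \ge \sigma\,\mathrm{meas}(\Omega) > 0$, keeping denominators bounded away from zero), the Lipschitz continuity of $f$, the extra Lipschitz-type assumption \eqref{B1}, and the $L^\infty$ bound on $v,u$ from Lemma~\ref{A3}, one gets a bound of the form $\le C \int_{Q_s} |\alpha(v)-\alpha(u)|\,|v-u|$, and then using $\alpha'\ge\lambda$ (hence $|v-u|\le \lambda^{-1}|\alpha(v)-\alpha(u)|$) this is $\le C' \int_0^s \|\alpha(v(\tau))-\alpha(u(\tau))\|^2_{L^2(\Omega)}\,d\tau$ or, in the $L^1$ framework of Lemma~\ref{L53}, $\le C'\int_0^s\|\alpha(v(\tau))-\alpha(u(\tau))\|_{L^1(\Omega)}\,d\tau$.

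Collecting these bounds yields a Gronwall-type inequality for $y(s) := \|\alpha(v(s))-\alpha(u(s))\|$ (in $L^1$ or $L^2$) of the form $y(s) \le y(0) + C'\int_0^s y(\tau)\,d\tau$; in fact this is precisely the estimate already recorded in Lemma~\ref{L53}, namely $y(s)\le e^{Ks} y(0)$. Since $v_0 = u_0$ we have $y(0) = \|\alpha(v_0)-\alpha(u_0)\| = 0$, whence $y(s) = 0$ for all $s\in[0,M]$, i.e. $\alpha(v(s)) = \alpha(u(s))$ a.e. in $\Omega$ for every $s$. Finally, because $\alpha$ is strictly increasing (indeed $\alpha' \ge \lambda > 0$ by (H2)), it is injective, so $\alpha(v) = \alpha(u)$ forces $v = u$ a.e. in $Q$, which is the claim. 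In fact the cleanest write-up is simply to invoke Lemma~\ref{L53} with $u_0 = v_0$ and then use injectivity of $\alpha$.

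The main obstacle is the nonlocal term: one must verify that the map $w \mapsto f(w)/(\int_\Omega f(w)\,dx)^2$ is Lipschitz from $L^\infty(\Omega)$ into the relevant space along the (bounded) solutions. This requires care in the algebraic manipulation $\frac{f(v)}{A^2}-\frac{f(u)}{B^2} = \frac{f(v)-f(u)}{A^2} + f(u)\frac{B^2-A^2}{A^2 B^2}$ with $A=\int_\Omega f(v)$, $B=\int_\Omega f(u)$, using $A,B\ge \sigma\,\mathrm{meas}(\Omega)$ from (H3), the boundedness of $f$ on the range of the solutions (its support is compact by (H3)), and then controlling $|A-B|\le \|f(v)-f(u)\|_{L^1(\Omega)}\le L\|v-u\|_{L^1(\Omega)}$ by the Lipschitz constant of $f$ — or, to close in the $L^1$ framework of Lemma~\ref{L53}, by \eqref{B1}. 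A secondary technical point is the rigorous justification of the integration by parts in time for $\partial_s(\alpha(v)-\alpha(u))$ tested against $v-u$, which in the $W^{-1,m'}$–$W^{1,m}$ duality is exactly the content of the Alt–Luckhaus lemma \cite{alt1983quasilinear} quoted in Section~\ref{section2}; this is routine but must be cited. Everything else is bookkeeping, and since Lemma~\ref{L53} already packages the Gronwall estimate, the proof of Lemma~\ref{533} reduces to that lemma plus injectivity of $\alpha$.
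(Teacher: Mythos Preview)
Your proposal is correct, and your final observation is the key one: once Lemma~\ref{L53} is in hand, Lemma~\ref{533} is an immediate corollary --- set $u_0=v_0$, obtain $\|\alpha(v(s))-\alpha(u(s))\|_{L^1(\Omega)}=0$ for all $s$, and conclude $v=u$ from the strict monotonicity of $\alpha$ (hypothesis~(H2), $\alpha'\ge\lambda>0$). This is a genuinely shorter route than the paper's.

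The paper instead gives a self-contained argument that does not invoke Lemma~\ref{L53} as a black box: it tests the difference of the equations with the regularized Heaviside function $H_\mu(v-u)=\min\{1,\max\{Y/\mu,0\}\}$, uses Tartar's inequality (Lemma~\ref{53}) to discard the elliptic term, carries out the same algebraic decomposition of the nonlocal term that you describe, passes $\mu\to 0$ to obtain a Gronwall inequality for $\int_\Omega(\alpha(v)-\alpha(u))^+$, and then --- rather than using injectivity of $\alpha$ --- squeezes out $\nabla(v-u)=0$ on $\{0<v-u<\mu\}$ from the coercivity in Tartar's inequality to deduce $v\le u$, finishing by symmetry. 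In effect the paper is reproving (the one-sided version of) Lemma~\ref{L53} in detail. Your approach is cleaner and avoids this redundancy; the paper's approach has the virtue of being explicit about where hypothesis~\eqref{B1} and the lower bound $\sigma$ on $f$ enter, and of not relying on the reference \cite{diaz1994nonlinear} for the contraction estimate. One caution on your sketched alternative of testing directly with $v-u$: the time-derivative term $\langle\partial_s(\alpha(v)-\alpha(u)),v-u\rangle$ is not the derivative of a clean Lyapunov quantity (the Legendre-transform identity applies to a single solution, not to a difference), which is precisely why both the paper and the reference behind Lemma~\ref{L53} use the sign-type test function $H_\mu$ instead; you correctly flag this and fall back on Lemma~\ref{L53}, so no harm done.
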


\begin{proof}
For a small positive $\displaystyle{\mu}$, let
$$
\displaystyle H_{\mu}(Y)
=\min\left\lbrace 1,\max\left\lbrace \dfrac{Y}{\mu},0\right\rbrace 
\right\rbrace, \mbox{ for all }  Y\in \mathbb{R}.
$$
We use $\displaystyle{H_{\mu}(v-u)}$ as a test function. Multiplying 
the first equation of problem (\ref{(P)}), corresponding to $u$ and $v$,  
by $\displaystyle{H_{\mu}(v-u)}$ and subtracting the two equations, 
we derive that 	
\begin{equation}
\label{A9}
\begin{array}{cccccc}
&\displaystyle	
\int_{0}^{s}\int_\Omega \frac{\partial }{\partial s} 
\left( \alpha(v)-\alpha(u) \right)H_{\mu}(v-u)
-\int_{0}^{s}\int_\Omega\left(\Delta_m v
-\Delta_m u\right) H_{\mu}(v-u) \\[0.3cm]
&=\displaystyle{\int_{0}^{s}\int_\Omega\kappa 
\frac{f(v)}{(\int_{\Omega}f(v)dx)^{2}}  H_{\mu}(v-u)}
-\displaystyle{\int_{0}^{s}\int_\Omega\kappa 
\frac{f(u)}{(\int_{\Omega}f(u)dx)^{2}}  H_{\mu}(v-u).}
\end{array}
\end{equation}
Using Green's formula and taking into account the boundary conditions,  
we obtain that
\begin{equation} 
\label{eky}
\displaystyle{\int_{0}^{s}\int_\Omega\left(\Delta_m v\right) H_{\mu}(v-u)
=-\int_{0}^{s}\int_\Omega \mid \nabla v\mid^{m-2} 
\nabla v\cdot\nabla(v-u)\cdot H_{\mu}'(v-u )}.
\end{equation}
We easily check that
\begin{equation} 
\label{eeky}
\displaystyle{\int_{0}^{s}\int_\Omega\left(\Delta_m u\right) H_{\mu}(v-u)
=-\int_{0}^{s}\int_\Omega \mid \nabla u\mid^{m-2} \nabla u
\cdot\nabla(v-u)\cdot H_{\mu}'(v-u ) }.
\end{equation}
From (\ref{eky}) and (\ref{eeky}), it follows that
$$
\begin{aligned}
&\displaystyle\int_{0}^{s}\int_\Omega\left(\Delta_m v
-\Delta_m u \right) \cdot H_{\mu}(v-u )\\
&=-\int_{0}^{s}\int_\Omega \left[\mid \nabla v\mid^{m-2} \nabla v
-\mid \nabla u \mid^{m-2} \nabla u \right]\nabla(v-u )
\cdot H_{\mu}'(v-u ).	
\end{aligned}
$$
By using Lemma~\ref{53}, it follows that
$$
\displaystyle{\int_{0}^{s}\int_\Omega\left(\Delta_m v
-\Delta_m u \right) \cdot H_{\mu}(v-u )\leq 0}.
$$
Hence,
\begin{equation}
\label{keee}
\begin{aligned}
\displaystyle\int_{0}^{s}\int_\Omega \frac{\partial }{\partial s} 
\left( \alpha(v)-\alpha(u) \right)H_{\mu}(v-u  )
&\leq \int_{0}^{s}\int_\Omega \frac{\partial }{\partial s} 
\left( \alpha(v)-\alpha(u) \right)H_{\mu}(v-u)\\
&-\int_{0}^{s}\int_\Omega\left(\Delta_m v-\Delta_m u\right) 
H_{\mu}(v-u).
\end{aligned}
\end{equation}
Recalling (\ref{A9}) and (\ref{keee}), we get
\begin{equation}
\label{wifg}
\begin{array}{cccccc}
\displaystyle{
\int_{0}^{s}\int_\Omega \frac{\partial }{\partial s} 
\left( \alpha(v)-\alpha(u) \right)\cdot H_{\mu}(v-u)}
&\displaystyle{\leq}
&\displaystyle{\int_{0}^{s}
\int_\Omega \gamma(x)\cdot  H_{\mu}(v-u )},
\end{array}
\end{equation}
where
$$
\displaystyle{\gamma(x) 
:=\kappa \dfrac{f(v)}{(\int_{\Omega}f(v)dx)^{2}}
-\kappa\dfrac{f(u)}{(\int_{\Omega}f(u)dx)^{2}}},
$$
$$
\begin{aligned}
\displaystyle\gamma(x)\cdot \chi_{\lbrace v-u >0\rbrace}
&=\kappa f(u)\dfrac{\int_{\Omega}\left[f(u)-f(v)\right]\mathrm{~d} x
\int_{\Omega}\left[ f(u)+f(v)   \right]\mathrm{~d}x}{
\left(\int_{\Omega}f(u)dx\right)^{2}\left(\int_{\Omega}f(v)dx\right)^{2}}
\cdot \chi_{\lbrace v-u >0\rbrace}\\
&+\kappa\dfrac{f(v)-f(u)}{\left(\int_{\Omega}f(v)dx\right)^{2}}
\cdot \chi_{\lbrace v-u >0\rbrace}.
\end{aligned}
$$
Adding this to (\ref{B1}), 
\begin{equation} 
\label{643197}
\begin{aligned}
\displaystyle\gamma(x)\cdot \chi_{\lbrace v-u >0\rbrace}
&\le \displaystyle{\kappa L_2 \dfrac{\int_{\Omega}\left( 
\alpha(v)-\alpha(u)\right) \mathrm{~d} x\left(   
\int_{\Omega}( f(v) + f(u)) \mathrm{~d} x\right)}{
\left(\int_{\Omega}f(u)dx\right)^{2}\left(\int_{\Omega}f(v)dx\right)^{2}}}
f(u)\cdot \chi_{\lbrace v-u >0\rbrace}\\
&+\displaystyle\kappa L_2 \dfrac{\left( \alpha(v)
-\alpha(u)\right)}{\left(\int_{\Omega}f(v)\mathrm{~d} x\right)^{2}}
\cdot \chi_{\lbrace v-u >0\rbrace}.
\end{aligned}		
\end{equation}
On the other hand, we have  
\begin{equation}
\label{yqqy}
\begin{array}{cccccccccccccc}
& \displaystyle{\kappa\cdot L_2 \int_{0}^{s}
\int_\Omega\dfrac{\left(\int_{\Omega}\left( \alpha(v)-\alpha(u)\right) 
\mathrm{~d} x\right)\left(\int_{\Omega}( f(v) + f(u)) \mathrm{~d} x\right) }{
\left(\int_{\Omega}f(u)dx\right)^{2}\left(\int_{\Omega}f(v)dx\right)^{2}}
f(u)\cdot \chi_{\lbrace v-u >0\rbrace}}\\
&\le \displaystyle{2 \kappa \cdot L_2\cdot meas(\Omega)
\cdot\underset{a \in \operatorname{supp}(f)}{\sup f(a)} 
\int_{0}^{s}\int_\Omega\dfrac{\left(\int_{\Omega}\left( 
\alpha(v)-\alpha(u)\right) \mathrm{~d} x\right)}{
\left(\int_{\Omega}f(u)dx\right)^{2}\left(\int_{\Omega}f(v)dx\right)^{2}}f(u)}\\
&\le \displaystyle{2 \kappa\cdot L_2\cdot meas(\Omega)
\cdot\left( \underset{	a \in \operatorname{supp}(f)}{\sup f(a)}\right)^2 
\int_{0}^{s}\int_\Omega\dfrac{\left(\int_{\Omega}\left( 
\alpha(v)-\alpha(u)\right) \mathrm{~d} x\right) }{
\left(\int_{\Omega}f(u)dx\right)^{2}
\left(\int_{\Omega}f(v)dx\right)^{2}}}.
\end{array} 
\end{equation}
Since 
$$
\begin{aligned}
\int_{\Omega}\left( \alpha(v)-\alpha(u)\right) \mathrm{~d} x
&=\int_{\Omega}\left( \alpha(v)-\alpha(u)\right)
\cdot \chi_{\lbrace v-u >0\rbrace} \mathrm{~d} x\\
&\quad + \int_{\Omega}\left( \alpha(v)-\alpha(u)\right)
\cdot \chi_{\lbrace v-u \le 0\rbrace} \mathrm{~d} x, 
\end{aligned}
$$ 
and $\alpha$ is an increasing function, we get that
\begin{equation}
\label{bgbh}
\int_{\Omega}\left( \alpha(v)-\alpha(u)\right) \mathrm{~d} x 
\le \int_{\Omega}\left( \alpha(v)-\alpha(u)\right)
\cdot \chi_{\lbrace v-u >0\rbrace} \mathrm{~d} x
\le \int_{\Omega}\left( \alpha(v)-\alpha(u)\right)^+\mathrm{~d} x.
\end{equation}
Keeping in mind (\ref{643197})--(\ref{bgbh})
and hypothesis $(H3)$ on $f$, it follows that
\begin{equation}	
\label{1ey25}
\begin{aligned}
&\displaystyle{	\int_{0}^{s} \int_{\Omega}\gamma(x)
\cdot \chi_{\lbrace v-u >0\rbrace}     \mathrm{~d} x \mathrm{~d} t} \\
&\leq\displaystyle{\dfrac{\kappa \cdot L_2}{(meas(\Omega)\sigma)^2}  
\int_{0}^{s} \int_{\Omega}\left( \alpha(v)-\alpha(u)\right)^+ 
\mathrm{~d} x \mathrm{~d} t}\\
&\quad + \displaystyle{\dfrac{2\kappa \cdot L_2\cdot meas(\Omega)}{
\left(meas(\Omega)\cdot\sigma\right)^4}
\cdot\left( \underset{	a\in \operatorname{supp}(f)}{\sup f(a)}\right)^2
\int_{0}^{s}\int_\Omega\left(\int_{\Omega}\left( 
\alpha(v)-\alpha(u)\right)^+ \mathrm{~d} x\right)}\\
&\leq \displaystyle{\left(\dfrac{\kappa L_2}{(meas(\Omega)\sigma)^2} 
+\dfrac{2\kappa \cdot L_2 (meas(\Omega))^2}{(meas(\Omega)\sigma)^4}
\left( \underset{a \in \operatorname{supp}(f)}{\sup f(a)}\right)^2\right)
\int_{0}^{s} \int_{\Omega}\left( \alpha(v)-\alpha(u)\right)^+ 
\mathrm{d} x \mathrm{d} t}.
\end{aligned}
\end{equation}
On the another hand, when we tend $\displaystyle{\mu}$ to zero, we get
$$
\displaystyle{\int_{0}^{s}\int_\Omega 
\frac{\partial }{\partial s} \left( \alpha(v)
-\alpha(u ) \right)H_{\mu}(v-u ) \longrightarrow 
\int_{0}^{s}\int_\Omega \frac{\partial }{\partial s} 
\left( \alpha(v)-\alpha(u  ) \right)
\cdot\chi_{\lbrace v-u >0\rbrace}}.
$$ 
We also have that
$$
\displaystyle{\int_{0}^{s}\int_\Omega \gamma(x)
\cdot H_{\mu}(v-u)\longrightarrow  \int_{0}^{s}
\int_\Omega \gamma(x)\cdot \chi_{\lbrace v-u >0\rbrace}.}
$$
This, combined with (\ref{wifg}) and (\ref{1ey25}),  
yields the existence of a positive constant $C_{11}$ such that
\begin{equation}
\displaystyle{
\int_{\Omega}(\alpha(v)-\alpha(u ))^+ \leq C_{11}
\cdot \int_{0}^{s} \int_{\Omega}(\alpha(v)-\alpha(u ))^+.}
\end{equation}
Applying the usual Gronwall's lemma, we get 
$\displaystyle{\alpha(v)\leq\alpha(u )}$.  Knowing that 
$\alpha$ is an increasing function, it follows, in particular, 
that $\displaystyle{\alpha(v)=\alpha(u )}$ 
in $\displaystyle{\lbrace v-u >0\rbrace}$. 
Keeping this and (\ref{ii})  in mind,  
we obtain that $\displaystyle{\nabla( v-u )=0}$   
in $\displaystyle{\lbrace\mu >v-u >0\rbrace}$. 
Hence, $\displaystyle{\max\lbrace 0, \min\lbrace v-u,\mu \rbrace\rbrace
=C_{12}}$, where $C_{12}$ is a positive constant. We deduce that 
$\displaystyle{v\leq u}$ in $Q$. Interchanging
the role of $v$ and $u$, the proof of uniqueness is finished.
\end{proof}


\section{Existence of an absorbing set and the universal attractor}
\label{section6}
 
In this section we prove the existence of an universal attractor 
by first proving the existence of an absorbing set. To this end, 
let us consider $\displaystyle{\left(S(s)\right)_{s\ge  0}}$ 
a continuous semigroup generated by problem (\ref{(P)}) such that
\begin{equation}
\label{L88}
\begin{array}{cccccc}\displaystyle{S(s):}
&\displaystyle{L^{\infty}(\Omega)}
&\displaystyle{\rightarrow L^{\infty}(\Omega)}\\
&\displaystyle{v_0}&\displaystyle{\rightarrow \alpha(v(s)),}
\end{array}
\end{equation}
where $v$ is the bounded weak solution of problem (\ref{(P)}). 
By using Theorem~\ref{qsdlm}, the map (\ref{L88}) is well defined. 
Now, let us formulate the second main result in this paper.

\begin{theorem}
\label{L5} 
For $m>2$,
$\displaystyle{\left(S(s)\right)_{s\ge  0}}$ 
possesses an universal attractor, which is bounded 
in $W^{1,m}_0(\Omega)$.
\end{theorem}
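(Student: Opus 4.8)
The plan is to verify the two defining properties of a universal attractor from Definition~\ref{L999}, following the standard semigroup machinery of Temam, using the \emph{a priori} estimates already assembled in Section~\ref{section4}. The central objects are the continuous semigroup $\left(S(s)\right)_{s\ge 0}$ defined in \eqref{L88} and a candidate absorbing set in $W^{1,m}_0(\Omega)$; once an absorbing set is produced and the semigroup is shown to be uniformly compact for large times, the attractor arises as the $\omega$-limit set of the absorbing set.

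First I would establish the existence of a bounded absorbing set in $L^\infty(\Omega)$, and then upgrade it to one bounded in $W^{1,m}_0(\Omega)$. For the $L^\infty$ bound, the key is that Lemma~\ref{A3} already gives $\|v_r\|_{L^\infty(Q)}\le C(M,\|v_0\|_{L^\infty(\Omega)})$; one must examine the dependence of this constant on $M$ and on the data to see that the orbit of any bounded set of $L^\infty(\Omega)$ eventually enters a fixed ball. Here the compact support of $f$ (hypothesis (H3)) is what makes the nonlocal right-hand side $\kappa f(v)/(\int_\Omega f(v))^2$ globally bounded independently of the solution, so the forcing does not grow with time; this is the structural feature that rescues boundedness. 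For the $W^{1,m}_0$ absorbing set, I would revisit the differential inequality \eqref{kw}, namely
\begin{equation*}
\frac{1}{m}\frac{\partial}{\partial s}\int_\Omega\bigl(|\nabla v_r|^2+r\bigr)^{m/2}\le C_9,
\end{equation*}
but sharpen it: multiplying \eqref{P_r} by $\partial_s v_r$ and using $\alpha_r'\ge\lambda>0$ together with a Poincar\'e-type inequality to reinsert a dissipative term $\delta\bigl(\int_\Omega|\nabla v_r|^m\bigr)^{q}$ on the left, one obtains an inequality of Ghidaglia type $z'+\delta z^q\le\eta$ with $z(s)=\int_\Omega(|\nabla v_r|^2+r)^{m/2}$. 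Then Lemma~\ref{li} yields $z(s)\le(\eta/\delta)^{1/q}+(\delta(q-1)s)^{-1/(q-1)}$, so for $s\ge t_0$ independent of the initial data, $\|v_r(s)\|_{W^{1,m}_0(\Omega)}\le\rho_0$ for a fixed $\rho_0$. Passing to the limit $r\to 0$ (as in the proof of Theorem~\ref{qsdlm:ws}) transfers this bound to $v$, giving the absorbing ball $\mathcal B=\{w:\|w\|_{W^{1,m}_0(\Omega)}\le\rho_0\}$.

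Next I would verify the attractor properties. The semigroup is continuous by the well-posedness results of Sections~\ref{section4}--\ref{section55} (Theorem~\ref{qsdlm:ws} and Lemma~\ref{533}). Uniform compactness for large times follows because, by \eqref{A8}, $\partial_s\alpha_r(v_r)$ is bounded in $L^2(t,M;L^2(\Omega))$ while $\alpha_r(v_r)$ is bounded in $L^\infty(t,M;W^{1,m}(\Omega))$, so by the Aubin--Lions compactness argument already invoked in the proof of Theorem~\ref{qsdlm:ws} the orbit $\bigcup_{s\ge t_0}S(s)\mathcal B_0$ is relatively compact; the embedding $W^{1,m}_0(\Omega)\hookrightarrow L^p(\Omega)$ being compact for $m>2$ is used here, which is exactly why the hypothesis $m>2$ appears in the statement. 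Then the standard theorem (Temam) applies: the $\omega$-limit set $A=\omega(\mathcal B)=\bigcap_{s\ge 0}\overline{\bigcup_{\tau\ge s}S(\tau)\mathcal B}$ is a nonempty, compact, invariant set that attracts every bounded set of $L^\infty(\Omega)$ (equivalently of the phase space $F$), and since $\mathcal B$ is bounded in $W^{1,m}_0(\Omega)$ and $A\subset\mathcal B$, the attractor inherits that bound.

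The main obstacle I anticipate is the first sharpening step: converting the merely dissipation-free estimate \eqref{kw} into a genuine Ghidaglia inequality with a positive power of $z$ on the left. This requires controlling $\int_\Omega|\nabla v_r|^m$ from below by a power of $z=\int_\Omega(|\nabla v_r|^2+r)^{m/2}$ and simultaneously keeping the $\int_\Omega\alpha_r'(v_r)(\partial_s v_r)^2$ term nonnegative to absorb cross terms — delicate because the time derivative term and the $m$-Laplacian term do not pair cleanly when $\alpha$ is merely Lipschitz with $\alpha'\ge\lambda$. One route is to test instead with $\partial_s\alpha_r(v_r)$ or with $-\Delta_m^r v_r$ and exploit $\alpha_r'\ge\lambda$ to bound $\int\alpha_r'|\partial_s v_r|^2$ below by $\lambda\int|\partial_s v_r|^2$; a second, cleaner route is to derive the uniform-in-time $W^{1,m}_0$ bound directly from the $L^\infty$ absorbing set by a translation argument, applying estimate \eqref{L4} of Lemma~\ref{L52} with $t$ fixed and the initial time shifted, which makes $C(t)$ depend only on the radius of the $L^\infty$ absorbing ball rather than on $\|v_0\|_{L^\infty}$. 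I would pursue this second route, as it reuses Lemma~\ref{L52} essentially verbatim and sidesteps the need for a fresh Ghidaglia estimate.
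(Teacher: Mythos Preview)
Your overall architecture (absorbing set $\Rightarrow$ uniform compactness $\Rightarrow$ $\omega$-limit set is the attractor) matches the paper's, but two of the three building blocks are handled differently from the paper, and in each case your proposed route has a genuine gap.

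\textbf{The $L^\infty$ absorbing set.} Invoking Lemma~\ref{A3} does not produce an absorbing set: the constant there is $C(M,\|v_0\|_{L^\infty(\Omega)})$, which depends on the time horizon and on the initial data, so it neither stabilises as $s\to\infty$ nor becomes uniform over bounded sets of data. You flag this as something to ``examine'', but the missing argument is precisely the substance of the step. The paper does \emph{not} reuse Lemma~\ref{A3} here; it tests the equation with $|\alpha(v)|^{p}\alpha(v)$, obtains an inequality of the form $z_p'+C_{15}z_p^{\,m-1}\le C_{14}$ for $z_p=\|\alpha(v)\|_{L^{p+2}(\Omega)}$, and then applies Ghidaglia's Lemma~\ref{li}. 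This is also where the hypothesis $m>2$ actually enters (one needs the exponent $m-1>1$ in Lemma~\ref{li}), not in the Sobolev embedding you cite---$W^{1,m}_0(\Omega)\hookrightarrow L^p(\Omega)$ is compact for all $m>1$.

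\textbf{Compactness in $L^\infty(\Omega)$.} The Aubin--Lions argument you propose yields relative compactness of the trajectory in something like $C([t,M];L^1(\Omega))$; it is a \emph{time}-compactness statement and does not give relative compactness of $\bigcup_{s\ge s_0}S(s)B$ as a subset of $L^\infty(\Omega)$, which is what Lemma~\ref{L6} requires (the phase space in \eqref{L88} is $L^\infty(\Omega)$). A uniform $W^{1,m}_0$ bound does not by itself give $L^\infty$-compactness unless $m>N$. The paper closes this gap by introducing additional structural hypotheses (H4)--(H5) on $\alpha$, invoking Vespri's local H\"older regularity result (Theorem~\ref{L111}) to obtain equicontinuity, and then applying Ascoli--Arzel\`a. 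Your proposal omits this regularity step entirely, and without it the compactness claim does not follow.

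Your ``second route'' for the $W^{1,m}_0$ absorbing set, reusing \eqref{L4} of Lemma~\ref{L52} after shifting the initial time into the $L^\infty$ absorbing ball, is exactly what the paper does in the Remark following the $L^\infty$ lemma; that part is fine once the $L^\infty$ absorbing set is in place.
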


In order to prove Theorem~\ref{L5}, 
we first show the following result.

\begin{lemma}
Under assumptions $(H1)$--$(H3)$, there exists a positive constant 
$\displaystyle{\rho}$ such that 
$$
\displaystyle{\parallel v (s) \parallel_{L^\infty(\Omega)}
\leq \rho, \quad \mbox{for all }~s>0.}
$$ 
\end{lemma}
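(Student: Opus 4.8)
The plan is to obtain a uniform-in-time $L^\infty$ bound for the weak solution $v(s)$ by revisiting the computation used in Lemma~\ref{A3}, but this time keeping track of the constants so that they do not depend on $M$, and then applying the Ghidaglia-type Lemma~\ref{li} to close the estimate. First I would fix a positive threshold $s_0$ (for instance $s_0 = \|v_0\|_{L^\infty(\Omega)} + 2$, large enough to dominate the support of $f$), multiply the equation by $\left[\left(\alpha(v)-\alpha(s_0)\right)^+\right]^{p+1}$ and integrate over $\Omega$ exactly as in Lemma~\ref{A3}. The diffusion term contributes a nonnegative quantity that is discarded, the boundary term vanishes, and the nonlocal source term is controlled using $(H3)$: since $f$ has compact support, $f(v)$ is supported on $\{v \in \operatorname{supp}(f)\}$, a region on which $v$ — and hence $\left(\alpha(v)-\alpha(s_0)\right)^+$ — is bounded by a constant depending only on $\operatorname{supp}(f)$, not on $M$.

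The key difference from Lemma~\ref{A3} is that instead of bounding the right-hand side by $C_1 \int_\Omega \left[\left(\alpha(v)-\alpha(s_0)\right)^+\right]^{p+1}$ and integrating crudely in time, I would set $z_p(s) := \|\left(\alpha(v(s))-\alpha(s_0)\right)^+\|_{L^{p+2}(\Omega)}$ and derive, via H\"older's inequality on the source term, a differential inequality of the form $z_p'(s) + \delta\, z_p(s) \le \eta$ with $\delta,\eta > 0$ independent of $M$ and, after a further application of H\"older, independent of $p$. Here the coercive term $\delta z_p$ must be manufactured: on the set where $\left(\alpha(v)-\alpha(s_0)\right)^+ > 0$ one has $f(v) = 0$ (since $s_0$ exceeds $\operatorname{supp}(f)$), so the source term actually contributes nothing there, and the decay instead comes from the nonlinear diffusion together with the Poincar\'e inequality applied to $\left(\alpha(v)-\alpha(s_0)\right)^+$; this yields $z_p' + \delta z_p^{q} \le 0$ for a suitable $q>1$ when $m>2$, after which Lemma~\ref{li} gives $z_p(s) \le (\delta(q-1)s)^{-1/(q-1)}$, a bound independent of the initial data for large $s$ and uniformly bounded on $[1,\infty)$, say. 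Letting $p \to \infty$ produces $\|\left(\alpha(v(s))-\alpha(s_0)\right)^+\|_{L^\infty(\Omega)} \le C$ for all $s > 0$ with $C$ absolute.

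As in the proof of Lemma~\ref{A3}, I would then repeat the argument with $u = -v$, using that $\tilde\alpha$, $\tilde f$ satisfy the same hypotheses, to bound $\|\left(\tilde\alpha(u(s))-\tilde\alpha(s_0)\right)^+\|_{L^\infty(\Omega)}$ from below, i.e. control the negative part of $v$. Combining the two one-sided bounds and using $(H2)$ (so that $\alpha^{-1}$ is Lipschitz with constant $1/\lambda$) yields $\|v(s)\|_{L^\infty(\Omega)} \le \rho$ for all $s > 0$, with $\rho$ depending only on $\lambda$, $\sigma$, $\operatorname{supp}(f)$, $\kappa$, $\operatorname{meas}(\Omega)$ and $m$ — but \emph{not} on $v_0$ for $s$ bounded away from $0$, and not on $M$. (For small $s$ one simply keeps the bound $C(1,\|v_0\|_{L^\infty(\Omega)})$ from Lemma~\ref{A3} on $[0,1]$, which merges with the uniform bound on $[1,\infty)$; since the statement only claims existence of \emph{some} $\rho$ valid for all $s>0$, and the semigroup starts from a fixed $v_0 \in L^\infty(\Omega)$, this is enough, though the genuinely useful content is the $v_0$-independent tail.)

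The main obstacle I anticipate is the manufacture of the dissipative term $\delta z_p^q$ with exponents uniform in $p$: one must combine the nonlinear diffusion estimate $-\int_\Omega \Delta_m v \cdot [(\alpha(v)-\alpha(s_0))^+]^{p+1} \gtrsim \int_\Omega |\nabla[(\alpha(v)-\alpha(s_0))^{+}]^{(p+2)/m}|^m$-type inequality (using $\alpha' \ge \lambda$ and the chain rule) with a Sobolev or Poincar\'e embedding, and then interpolate to re-express the resulting $W^{1,m}_0$ norm as a power of the $L^{p+2}$ norm $z_p$; getting the power $q$ to be a fixed number $>1$ — which is exactly where the hypothesis $m > 2$ enters — and checking that $\delta$ does not degenerate as $p\to\infty$ requires care, but is a standard Moser-type iteration / Alikakos argument. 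Everything else is a direct transcription of the already-established computations with the time integration replaced by the Ghidaglia lemma.
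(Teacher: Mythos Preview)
Your proposal is correct in spirit and would lead to a valid proof, but it follows a genuinely different route from the paper.

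The paper multiplies the equation by the \emph{odd} test function $|\alpha(v)|^{p}\alpha(v)$ (so both signs of $v$ are treated at once), obtains
\[
\frac{1}{p+2}\frac{d}{ds}\int_\Omega |\alpha(v)|^{p+2}
+(p+1)\int_\Omega |\nabla v|^m \alpha'(v)|\alpha(v)|^p
=\kappa\int_\Omega \frac{f(v)}{(\int_\Omega f)^2}\,|\alpha(v)|^p\alpha(v),
\]
and then lower-bounds the dissipative term by $C_{15}\int_\Omega|\alpha(v)|^{m+p}$ via a pointwise dichotomy ``$|\nabla v|\ge |\alpha(v)|$'' vs.\ ``$|\nabla v|\le |\alpha(v)|$'' together with Poincar\'e's inequality. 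With $z_p=\|\alpha(v)\|_{L^{p+2}}$ this yields $z_p'+C_{15}z_p^{m-1}\le C_{14}$ with constants essentially independent of $p$, so Ghidaglia's Lemma~\ref{li} applies and one passes directly to $p\to\infty$. No Moser iteration is invoked.

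Your approach instead uses the truncated test function $[(\alpha(v)-\alpha(s_0))^+]^{p+1}$, exploits the compact support of $f$ to make the source term vanish identically on $\{v>s_0\}$ (so $\eta=0$ in Ghidaglia), controls the dissipation via the chain rule $\nabla(\alpha(v)-\alpha(s_0))^+=\alpha'(v)\nabla v$ plus Poincar\'e, and then --- since the resulting coercivity constant degenerates like $p^{1-m}$ --- closes with a Moser/Alikakos iteration before symmetrizing via $u=-v$. This is more in line with the strategy of Lemma~\ref{A3} and is arguably more robust: it does not rely on the paper's pointwise case split (which, as written, is not fully justified, since the two cases are pointwise rather than global), and it isolates cleanly why $m>2$ is needed. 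The price you pay is the extra iteration machinery and the separate treatment of the negative part; the paper's choice of test function handles both signs in one stroke and, granting the dichotomy argument, avoids iteration entirely.
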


\begin{proof}
Multiplying the first equation of $\displaystyle{ {(\ref{(P)})}}$ by
$\displaystyle{\left|\alpha(v)\right|^p\alpha(v)}$, and integrating 
over $\Omega$, we obtain that
$$
\displaystyle{\int_{\Omega}\frac{\partial 
\alpha (v)}{\partial s}\left|\alpha(v)\right|^p\alpha(v)
-\int_{\Omega}\Delta_m v \cdot\left|\alpha(v)\right|^p\alpha(v)
=\kappa \int_{\Omega}\frac{f(v)}{(\int_{\Omega}f(v)dx)^{2}}
\left|\alpha(v)\right|^p\alpha(v).}
$$
Then, 
$$
\displaystyle{\dfrac{1}{p+2}\frac{\partial}{\partial s}
\int_{\Omega}\left|\alpha(v)\right|^{p+2}-\int_{\Omega}\Delta_m v
\cdot\left|\alpha(v)\right|^p\alpha(v)
=\kappa \int_{\Omega}\frac{f(v)}{(\int_{\Omega}f(v)dx)^{2}}
\left|\alpha(v)\right|^p\alpha(v).} 
$$
Applying Green's formula, and using the boundary conditions, we get
\begin{multline}
\label{L3}
\dfrac{1}{p+2}\frac{\partial }{\partial s}
\int_{\Omega}\left|\alpha(v)\right|^{p+2}+\left(p+1\right)
\int_{\Omega}\left|\nabla v\right|^{m}\alpha'(v)\left|\alpha(v)\right|^p\\
=\kappa \int_{\Omega}\frac{f(v)}{(\int_{\Omega}
f(v)dx)^{2}}\left|\alpha(v)\right|^p\alpha(v).
\end{multline}
On the other hand, since $\displaystyle{\alpha'(v)\geq\lambda}$, we have
$$
\displaystyle{\int_{\Omega}\left|\nabla v\right|^{m}
\alpha'(v)\left|\alpha(v)\right|^p
\geq\lambda \int_{\Omega}\left|\nabla v\right|^{m}
\left|\alpha(v)\right|^p, \quad \mbox{in }~~ [0,M].}
$$
Now, we discuss two cases.\\

\noindent Case 1. If $\displaystyle{\left|\nabla v \right|
\geq \left|\alpha(v)\right|}$, then
\begin{equation}
\label{w2}
\displaystyle{\int_{\Omega}\left|\nabla v\right|^{m}
\alpha'(v)\left|\alpha(v)\right|^p\geq\lambda 
\int_{\Omega}\left|\alpha(v)\right|^{m+p}.}
\end{equation}

\noindent Case 2. If $\displaystyle{\left|\nabla(v)\right|
\leq \left|\alpha(v)\right|}$, we get
$$
\displaystyle{\int_{\Omega}\left|\nabla v\right|^{m}\alpha'(v)\left|\alpha(v)\right|^p
\geq\lambda \int_{\Omega}\left|\nabla v\right|^{m}\left|\alpha(v)\right|^p
\geq \lambda \int_{\Omega}\left|\nabla v \right|^{m+p}.} 
$$
By using Poincar\'{e}'s inequality, we derive that
$$
\displaystyle{\int_{\Omega}\left|\nabla v\right|^{m}\alpha'(v)
\left|\alpha(v)\right|^p\geq \lambda
\cdot C_{13} \int_{\Omega}\left| v \right|^{m+p}}
\mbox{, for a positive  constant  } C_{13}.	
$$
The smoothness of the function $\alpha$ implies
\begin{equation} 
\label{w1}
\displaystyle{\int_{\Omega}\left|\nabla v\right|^{m}
\alpha'(v)\left|\alpha(v)\right|^p
\geq \dfrac{\lambda\cdot C_{13}}{L_1}
\int_{\Omega} \left|\alpha(v)\right|^{m+p},}
\end{equation}
where $L_1$ is the Lipshitzity constant of function $\alpha$.
Recall from $\displaystyle{{(\ref{L3})}}-\displaystyle{ {(\ref{w1})}}$ that
\begin{multline*}
\dfrac{1}{p+2}\frac{\partial }{\partial s}
\int_{\Omega}\left|\alpha(v)\right|^{p+2}
+\min\left\lbrace \dfrac{\lambda\cdot C_{13}}{L_1},\lambda\right\rbrace
\cdot\int_{\Omega} \left|\alpha(v)\right|^{m+p}\\
\leq\kappa \int_{\Omega}\frac{f(v)}{\left(\int_{\Omega}f(v)dx\right)^{2}}
\left|\alpha(v)\right|^p\alpha(v).
\end{multline*}
It is  easy to check that  
$$
\begin{aligned}
\displaystyle &\dfrac{1}{p+2}\frac{\partial }{\partial s}
\int_{\Omega}\left|\alpha(v)\right|^{p+2}
+\min\left\lbrace \dfrac{\lambda\cdot C_{13}}{L_1},\lambda\right\rbrace
\cdot\int_{\Omega} \left|\alpha(v)\right|^{m+p}
\leq C_{14}\int_{\Omega} \left|\alpha(v)\right|^{p+1},\\ 
&\mbox{for a positive constant } C_{14}.
\end{aligned}
$$
Set $\displaystyle{z_p(s):=\parallel\alpha(v) \parallel_{L^{p+2}(\Omega)}}$
and $C_{15}:=\min\left\lbrace \dfrac{\lambda\cdot C_{13}}{L_1},\lambda\right\rbrace$.
Making use of  H\"{o}lder's inequality and the continuous
embedding of $\displaystyle{L^{m+p}(\Omega)}$ in  $\displaystyle{L^{p+2}(\Omega)}$, 
we obtain that 
$$
\displaystyle{
\frac{\partial z_p(s)}{\partial s}\left(z_p(s)\right)^{p+1}
+C_{15}\left(z_p(s)\right)^{m+p}
\leq C_{14}\left(z_p(s)\right)^{p+1}}.
$$
It follows that
\begin{equation}
\displaystyle{\frac{\partial z_p(s)}{\partial s}
+C_{15}\left(z_p(s)\right)^{m-1}\leq C_{14}.}
\end{equation}
This puts us in a position to employ Ghidaglia's Lemma~\ref{li}, to get
\begin{equation}
\displaystyle{z_p(s)\leq \left( 
\dfrac{ C_{14}}{C_{15}}\right)^{\dfrac{1}{m-1}}
+\dfrac{1}{\left(C_{15}\left(m-2\right)s\right)^{\dfrac{1}{m-2}}}
:=\rho_s.}
\end{equation}
Letting $p$ going to infinity, we obtain that
$$
\parallel\alpha(v) \parallel_{L^{\infty}(\Omega)}\leq C(\eta)
$$
for all $s\geq\eta>0$. This implies 
\begin{equation}
\displaystyle{
\parallel v(s) \parallel_{L^{\infty}(\Omega)}
\leq \max\left(\mid\alpha^{-1}(C(\eta))\mid,
\mid \alpha^{-1}(-C(\eta))\mid\right).}
\end{equation}
Let us consider 
$\displaystyle{\rho:= \max\left(\mid\alpha^{-1}(C(\eta))\mid,
\mid \alpha^{-1}(-C(\eta))\mid\right)}$ as the radius of  
the ball centered at 0. This ball is an absorbing set  
in $\displaystyle{L^{\infty}(\Omega)}$.
\end{proof}

\begin{remark}
Existence of an absorbing set in $\displaystyle{W^{1,m}(\Omega)}$ 
is obtained due to inequality  (\ref{L4}) 
together with the lower semi-continuity
of the norm. It yields that
$$
\displaystyle{ \left|\left|v\left(s\right)\right|\right|_{W^{1,m}(\Omega)}
\leq C(t):=\rho_t,
\quad \mbox{ for all }~~s\geq t.}
$$
Then the ball $\displaystyle{B\left(0,\rho_t\right)}$ 
is an absorbing set in $\displaystyle{W^{1,m}(\Omega)}$.
\end{remark}

Now, in order to prove Lemma~\ref{L6} below, 
we show that the solution of problem (\ref{(P)}) 
is H\"{o}lder continuous. To this end, we set 
$\alpha(v):=w$ and we add the following assumptions:\\ 

\noindent $(H4)$ $\alpha$ is a strict increasing function 
and $\displaystyle{\alpha^{-1}\in \mathcal{C}^1(\mathbb{R})}$;\\

\noindent $(H5)$ $i)$ $\displaystyle{\left(\alpha^{-1}(w)\right)^{\prime}}$ 
is degenerate in the neighborhood of zero and there exists\\ 
$z\in[-\eta_0,\eta_0]$, $\eta_0$ a positive constant, such that
\begin{equation}
\beta_0\left|z\right|^{k_0}\leq\left(\alpha^{-1}(w)\right)^{\prime}
\leq\beta_1\left|z\right|^{k_1}
\end{equation}
for positive constants $\beta_j$ and $k_j$, $j=0,1$;

$ii)$ there exists two positive constants $e_0$ and $e_1$ such that
\begin{equation}
\displaystyle{	e_0
\leq\left(\alpha^{-1}(w)\right)^{\prime}\leq e_1},
\end{equation}
\begin{equation}
\label{L9}
\displaystyle{
\frac{\partial w}{\partial s}-\operatorname{div}
\left(\left|\left(\alpha^{-1}(w)\right)^{\prime}\right|^{m-2}
\cdot\left(\alpha^{-1}(w)\right)^{\prime}|\nabla w|^{m-2} 
\nabla w\right)=\kappa \frac{f(\alpha^{-1}(w))}{
\left(\int_{\Omega}f(\alpha^{-1}(w))dx\right)^{2}},}
\end{equation}
\begin{equation}
\displaystyle{w=0},
\end{equation}
for all $\displaystyle{z\in]-\infty,-\eta_0[~\bigcup~]\eta_0,+\infty[}$.

Identifying (\ref{L9}) with $(1)$ in the paper \cite{vespri1992local}, 
and using hypotheses (H3)--(H5), we can apply the following theorem.

\begin{theorem}[See \cite{vespri1992local}]
\label{L111}
Suppose that  Theorem~\ref{qsdlm}  holds. 
Then, under assumptions (H3)--(H5), the solution 
of problem (\ref{(P)}) is H\"{o}lder continuous.
\end{theorem}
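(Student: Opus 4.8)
The statement is essentially a transcription of the interior regularity theorem of \cite{vespri1992local} to our setting, so the plan is not to reprove that result but to check, step by step, that problem \eqref{(P)} — after the change of unknown $w:=\alpha(v)$ — falls within its scope. First I would record that, by Theorem~\ref{qsdlm}, Theorem~\ref{qsdlm:ws} and Lemma~\ref{L52}, problem \eqref{(P)} admits a bounded weak solution $v$ with $v\in L^\infty(Q)\cap L^m(0,M;W^{1,m}(\Omega))$ and $\partial_s\alpha(v)\in L^2(t,M;L^2(\Omega))$ for every $t>0$; since $\alpha$ is Lipschitz and, by $(H4)$, $\alpha^{-1}\in\mathcal C^1(\mathbb R)$, the function $w=\alpha(v)$ enjoys the same regularity and $v=\alpha^{-1}(w)$, so it suffices to prove that $w$ is locally H\"older continuous in $Q$.

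Next I would verify the structure conditions. Equation \eqref{L9} has the divergence form $\partial_s w-\operatorname{div}\mathbf a(x,s,w,\nabla w)=g(x,s)$ with $\mathbf a(x,s,w,\xi)=b(w)\,|\xi|^{m-2}\xi$ and $b(w)=\bigl|(\alpha^{-1})'(w)\bigr|^{m-1}$. Coercivity and growth of the principal part, namely $\mathbf a(x,s,w,\xi)\cdot\xi=b(w)|\xi|^m$ and $|\mathbf a(x,s,w,\xi)|\le b(w)|\xi|^{m-1}$, then reduce to a two-sided control of $b(w)$, which is exactly what $(H5)$ provides: on $\{|v|\le\eta_0\}$ the degenerate/singular bounds $\beta_0|v|^{k_0}\le(\alpha^{-1})'(w)\le\beta_1|v|^{k_1}$ of $(H5)\,i)$ apply, and on $\{|v|>\eta_0\}$ the uniform bounds $e_0\le(\alpha^{-1})'(w)\le e_1$ of $(H5)\,ii)$ make \eqref{L9} a uniformly parabolic $m$-Laplacian--type equation; together with the boundedness of $v$ (absorbing-set lemma) these place \eqref{L9} in the admissible class of \cite{vespri1992local}.

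Then I would check that the forcing term is harmless: by $(H3)$, $f\ge\sigma>0$ gives $\bigl(\int_\Omega f(\alpha^{-1}(w))\,dx\bigr)^2\ge(\sigma\cdot meas(\Omega))^2>0$, while $f$, being Lipschitz with compact support, is bounded; hence $g=\kappa f(\alpha^{-1}(w))\bigl(\int_\Omega f(\alpha^{-1}(w))\,dx\bigr)^{-2}\in L^\infty(Q)$, which comfortably meets the integrability hypothesis on the right-hand side in \cite{vespri1992local}. Having identified \eqref{L9} with equation $(1)$ of \cite{vespri1992local} and verified its hypotheses, the cited theorem yields local H\"older continuity of $w$ in $Q$, and therefore of $v=\alpha^{-1}(w)$.

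The main obstacle is the gluing of the two regimes in $(H5)$. Away from $\{v=0\}$ the equation is nondegenerate and the De Giorgi--Nash--Moser machinery of \cite{vespri1992local} applies directly; the delicate point is the neighbourhood of the degeneracy set $\{v=0\}$, where the intrinsic scaling of that paper must be invoked with the weight exponents $k_0,k_1$, and one must make sure that the H\"older exponents and constants produced on $\{|v|\le\eta_0\}$ and on $\{|v|>\eta_0\}$ are compatible so that a single modulus of continuity is patched across $\{|v|=\eta_0\}$. Checking that our $\beta_j,k_j,e_j,\sigma$ and the $L^\infty$-bound on $v$ fit the quantitative ranges required in \cite{vespri1992local} is the only part demanding real care; everything else is a verification of hypotheses.
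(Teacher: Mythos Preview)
Your proposal is correct and follows exactly the route the paper takes: the paper does not give a proof of Theorem~\ref{L111} at all but simply states, just before the theorem, that ``identifying (\ref{L9}) with $(1)$ in the paper \cite{vespri1992local}, and using hypotheses (H3)--(H5), we can apply the following theorem.'' Your outline is a more detailed version of precisely this verification --- checking that after the substitution $w=\alpha(v)$ the equation has the right divergence structure, that $(H5)$ supplies the two-sided control of the diffusion coefficient, and that $(H3)$ makes the source term bounded --- so there is nothing to compare beyond noting that you have spelled out what the paper leaves implicit.
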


In the following Lemma we prove that the operator
$\displaystyle{\left(S(s)\right)_{s\ge  0}}$  
is uniformly compact for $s$ large enough.

\begin{lemma}
\label{L6}
If $B$ is a bounded set, then 
$$
\displaystyle{
\bigcup_{s \geq s_{0}} S(s) B}
$$ 
is relatively compact
for any $s\geq s_0$.
\end{lemma}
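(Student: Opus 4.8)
The plan is to use the three \emph{a priori} estimates already available plus the H\"older regularity of Theorem~\ref{L111} to bootstrap into a compactness statement via a compact Sobolev embedding and Aubin's lemma. First I would fix a bounded set $B\subset L^\infty(\Omega)$ and $s_0>0$. By the absorbing-set lemma just proved, there is a time $t_0=t_0(B,s_0)$ after which $S(s)v_0=\alpha(v(s))$ stays inside the $L^\infty(\Omega)$-ball of radius $\rho$ for every $v_0\in B$; in particular, without loss of generality one may replace $B$ by that fixed absorbing ball and assume every orbit we track has already entered it. Then inequality (\ref{L4}) gives $\|v(s)\|_{W^{1,m}(\Omega)}\le C(t_0/2)$ uniformly for $s\ge t_0/2$, so by the smoothness/Lipschitz bound on $\alpha$ the family $\{\alpha(v(s)):s\ge t_0\}$ is bounded in $W^{1,m}_0(\Omega)$. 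Since $m>2$, the embedding $W^{1,m}_0(\Omega)\hookrightarrow L^1(\Omega)$ (indeed into $C(\overline{\Omega})$ if $m>N$, and into $L^q$ for suitable $q$ otherwise) is compact, which already shows that the \emph{slices} $S(s)B$ are relatively compact in $L^1(\Omega)$ for each fixed $s\ge s_0\ge t_0$.

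The nontrivial point is the \emph{uniformity in $s$}, i.e.\ relative compactness of the whole union $\bigcup_{s\ge s_0}S(s)B$. For this I would bring in the time-derivative estimate (\ref{A8}): $\partial_s\alpha_r(v_r)$ is bounded in $L^2(t,M;L^2(\Omega))$ uniformly, and passing to the limit in $r$ (as in the proof of Theorem~\ref{qsdlm:ws}) one gets $\partial_s\alpha(v)\in L^2(t_0,\infty;L^2(\Omega))$ with a bound depending only on $t_0$ and on $\rho$, hence \emph{independent of the starting data and of the time window}. Combined with the uniform bound of $\alpha(v(\cdot))$ in $L^\infty(t_0,\infty;W^{1,m}_0(\Omega))$ and the compact embedding $W^{1,m}_0(\Omega)\hookrightarrow\hookrightarrow L^1(\Omega)\hookrightarrow W^{-1,m'}(\Omega)$, the Aubin--Lions--Simon lemma shows that the map $s\mapsto\alpha(v(s))$ lies in a compact subset of $C([t_0,T];L^1(\Omega))$ for every $T$; the H\"older continuity from Theorem~\ref{L111} upgrades this to equicontinuity in $s$ of the family $\{S(s)v_0\}$ on $[s_0,\infty)$, uniformly in $v_0\in B$. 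An equicontinuous, pointwise-relatively-compact family of maps into the complete space $L^1(\Omega)$ has relatively compact range, which is exactly the assertion that $\bigcup_{s\ge s_0}S(s)B$ is relatively compact.

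The main obstacle I anticipate is making the above bounds genuinely \emph{uniform in time}: estimates (\ref{L4}) and (\ref{A8}) as stated carry constants $C(t)$, $C_1(t,M)$ that a priori depend on the (finite) horizon $M$, whereas here we need them on $[s_0,\infty)$. The way around it is to exploit the absorbing property: once an orbit has entered the ball $B(0,\rho)\subset L^\infty(\Omega)$ (equivalently the ball $B(0,\rho_{t})\subset W^{1,m}_0(\Omega)$ of the preceding remark), one may restart the evolution from any later time, so the estimates of Lemma~\ref{L52} applied on successive unit time-intervals $[k,k+1]$, $k\ge s_0$, yield constants depending only on $\rho$ and on the interval length $1$, not on $k$. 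Summing, or rather taking the supremum over $k$, gives the time-uniform bounds on $\partial_s\alpha(v)$ and on $\alpha(v)$ in $W^{1,m}_0(\Omega)$ needed above. Once that uniformity is secured, the compact embedding together with Aubin's lemma and Ascoli's theorem close the argument without further difficulty.
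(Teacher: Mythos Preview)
Your route is considerably more elaborate than the paper's and lands in a weaker topology than is actually needed. The paper's proof is essentially two lines: Lemma~\ref{A3} gives a uniform $L^\infty$ bound on the family $\{S(s)v_0:\ s\ge s_0,\ v_0\in B\}$, and Theorem~\ref{L111} (Vespri) gives a uniform \emph{spatial} H\"older estimate; these two together are exactly the hypotheses of Ascoli--Arzel\`a in the $x$-variable, so the whole family is relatively compact in $C(\overline\Omega)\subset L^\infty(\Omega)$. No $W^{1,m}_0$ bound, no Aubin--Lions, no interval-by-interval restart bookkeeping is invoked.

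Your argument, as written, only reaches relative compactness in $L^1(\Omega)$ (or some $L^q$ when $m\le N$), because your compactness input is the embedding $W^{1,m}_0\hookrightarrow\hookrightarrow L^q$ and you use Theorem~\ref{L111} solely for equicontinuity in $s$. But the semigroup~(\ref{L88}) acts on $L^\infty(\Omega)$, and the proof of Theorem~\ref{L5} explicitly extracts limits $S(s_n)d_n\to\ell$ in $L^\infty(\Omega)$; $L^1$-compactness does not suffice there. The fix is precisely the paper's shortcut: use the spatial half of the H\"older estimate, so that every $S(s)v_0$ lies in a fixed H\"older ball of $C^{0,\gamma}(\overline\Omega)$ with constants depending only on the uniform $L^\infty$ bound. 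Once you do that, Ascoli--Arzel\`a in the $x$-variable already gives compactness in $L^\infty(\Omega)$ of the entire union, and the Aubin--Lions detour together with the time-uniformity discussion becomes superfluous.
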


\begin{proof}
We can derive from Lemma~\ref{A3} that the set 
$\bigcup_{s \geq s_{0}} S(s) B$ is bounded in 
$L^\infty(\Omega)$. Furthermore, the approximation 
solution is uniformly bounded. We are in position to invoke 
Theorem~\ref{L111} and, consequently, 
we deduce, by Ascoli--Arzel\`{a} 
theorem, that the set $\displaystyle{
\bigcup_{s \geq s_{0}} S(s) B}
$  is relatively compact.
\end{proof}

\begin{proof}[Proof of Theorem~\ref{L5}]
We have to prove that  $\displaystyle{\left(S(s)\right)_{s\ge 0}}$ 
related to  problem $ {(\ref{(P)})}$ possesses 
an  universal attractor. We consider the following $\omega$-limit: 	
$$
\omega(B_0):= \lbrace v \in L^{\infty}(\Omega): \exists s_n \rightarrow+\infty, 
~\exists v_n \in B_0\mbox{ such that }  
S\left(s_n\right) v_n \rightarrow v
\mbox{ in } L^{\infty}(\Omega)\rbrace,
$$ 
where 
$B_0:=\overline{S(t) B}^{L^{\infty}(\Omega)}$ for some $t>0$.	
We apply Lemma~1.1 in \cite{Temam} to get that $\omega(B_0)$ 
is a nonempty compact invariant set. Then the first condition 
of Definition~\ref{L999} holds. For the second condition 
of Definition~\ref{L999}, we proceed by absurd.  
Assume that $A$ does not attract each bounded set in $L^{\infty}(\Omega)$. 
Then there exists a bounded set $B$, not attracted by $A$, and there exists
$\displaystyle{ s_n\rightarrow\infty}$ and $\epsilon>0$ such that
\begin{equation}
\displaystyle{
dist\left(S(s_n)B,A\right)\geq\dfrac{\epsilon}{2}\mbox{,}}
\end{equation}
from whence follows that,
for every $n$, there exists $ d_n\in B$ such that
\begin{equation}
\label{L8}
\displaystyle{
dist\left(S(s_n)d_n,A\right)\geq\dfrac{\epsilon}{2}.}
\end{equation}
Knowing that $B_0$ is an absorbing set for $B$ (a bounded set), 
there exists $s$ such that $s\geq s_1$, where $s_1$ is a positive constant,  
and we have $S(s)B\subset B_0$. Since  $s_n\rightarrow\infty$, then $s_n\geq s_1$ 
for large enough $n$ and $S(s_n)B\subset B_0$. As a consequence, we have 
\begin{equation}
\displaystyle{
S(s_n)d_n\in  B_0.}
\end{equation}
On the other hand, recall from  Lemma~\ref{L6} that 
$\displaystyle{\bigcup_{s \geq s_{0}} S(s) B_0}$ is relatively compact.
Consequently, the sequence $\displaystyle{\left(S(s_n)d_n\right)_n}$ 
is also relatively compact. So, there exists a subsequence such that
$$
\displaystyle{S(s_n)d_n\longrightarrow 
\ell\in L^{\infty}(\Omega), 
~ \mbox{as}~s_n\longrightarrow \infty.}
$$
With the semi-group propriety, we have
\begin{equation}
\label{qhslds}
\displaystyle{\lim\limits_{n\longrightarrow\infty}S(s_n)d_n
=\lim\limits_{n\longrightarrow\infty} S(s_n-s_1)S(s_1)d_n
=\lim\limits_{n\longrightarrow\infty} S(s'_n)d'_n=\ell,}	
\end{equation}
where $s'_n:=s_n-s_1$ and $d'_n:=S(s_1)d_n$. 
We infer that 
\begin{equation}
\label{RFT1}
\displaystyle{\omega(B_0)
:=\lbrace v:~~\exists s_n,~d_n~\mbox{such that}~~S(s_n)
d_n\longrightarrow v\rbrace.}
\end{equation}
In view of the fact that $\displaystyle{d'_n\in B_0}$, then $\displaystyle{s'_n}$ 
and $\displaystyle{d'_n}$ play the role of $s_n$ and $d_n$, respectively, 
in $\eqref{RFT1}$. Keeping this and \eqref{qhslds} in mind, 
we obtain that $\displaystyle{\ell\in \omega(B_0)=A}$. Then 
$\displaystyle{dist\left(\ell,A\right)=0<\dfrac{\epsilon}{2}}$. 
This is in contradiction with inequality ${(\ref{L8})}$. Hence, 
$A$ is the universal attractor. 
\end{proof}


\section{Conclusions and perspectives} 
\label{section7}

In this paper, we proved existence and uniqueness of a bounded weak solution 
in Sobolev spaces for a non-local thermistor problem in the presence 
of triply nonlinear terms. We also proved the existence of the global attractor. 
As future work, we  plan to study the regularity of the global attractor, 
the stability of the solution, and the optimal control for the thermistor problem 
\eqref{(P)}.  


\section*{Acknowledgments}

Torres was supported by FCT through CIDMA and project UIDB/04106/2020.



\end{document}